\pgfplotsset{compat=1.13}
\def\R{\mathbb{R}}
\def\Z{\mathbb{Z}}
\def\bfn{\mathbf{n}}
\def\bfm{\mathbf{m}}
\DeclarePairedDelimiter\scp{\lparen}{\rparen}    
\DeclarePairedDelimiter\scb{\lbrace}{\rbrace}    
\DeclarePairedDelimiter\scs{\lbrack}{\rbrack}    
\DeclarePairedDelimiter\abs{\lvert}{\rvert}
\DeclarePairedDelimiter\norm{\lVert}{\rVert}
\DeclarePairedDelimiterX\innerp[2]{\langle}{\rangle}{#1,#2}
\providecommand\given{}
\newcommand\SetSymbol[1][]{%
\nonscript\:#1\vert{}
\allowbreak{}
\nonscript\:
\mathopen{}}
\DeclarePairedDelimiterX\Set[1]\{\}{%
\renewcommand\given{\SetSymbol[\delimsize]}
#1
}
\newcommand*{\id}[1]{\,\mathrm{d}#1}
\newtheorem{theorem}{Theorem}
\newtheorem{lemma}[theorem]{Lemma}
\title[Band-limited maximizers, II]{Band-limited maximizers for a Fourier extension inequality on the circle, II}
\author[Barker]{James Barker}
\address[Barker]{Fraunhofer SCAI, Schloss Birlinghoven, 53754 Sankt
  Augustin, Germany}
\email{james.barker@scai.fraunhofer.de}
\address[Barker]{Institute for Numerical Simulation, Endenicher
  Allee 19B, 53115 Bonn, Germany}
\author[Thiele]{Christoph Thiele}
\author[Zorin-Kranich]{Pavel Zorin-Kranich}
\address[Thiele, Zorin-Kranich]{Hausdorff Center for Mathematics,
  53115 Bonn, Germany}
\email{thiele@math.uni-bonn.de}
\email{pzorin@math.uni-bonn.de}
\begin{document}
\begin{abstract}
  Among the class of functions on the circle with Fourier modes up to
  degree $120$, constant functions are the unique real-valued
  maximizers for the endpoint Tomas-Stein inequality.
\end{abstract}

\maketitle

\section{Introduction}

This article continues the investigation in~\cite{oliveira2019band} of
extremizers of the classical Tomas-Stein~\cite{tomas1975restriction}
Fourier extension inequality on the circle in classes of band limited
functions.  We improve the main theorem in~\cite{oliveira2019band},
which concerns functions with Fourier modes up to degree $30$, towards
degree $N=120$.

\begin{theorem}\label{thm:main}
  Let $f\in L^2\scp*{\mathbb{S}^1}$ be real-valued.  Assume that
  $\widehat{f}_n=0$ for all $n>120$. Then
  \[
    \Phi\scp*{f}\leq\Phi\scp*{\bf{1}},
  \]
  with equality if and only if $f$ is constant.
\end{theorem}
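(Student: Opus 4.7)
The plan is to reformulate the theorem as a polynomial inequality in the finitely many Fourier coefficients $a_n := \widehat{f}_n$ with $\abs{n}\le 120$, and then to verify it by combining a spectral analysis of the functional at $f=\bf{1}$ with computer-assisted bounds on the higher-order remainder.

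First I would expand
\[
\Phi\scp*{f} = \norm*{\widehat{f\sigma}}_{L^{6}(\R^{2})}^{6} = \norm*{(f\sigma)^{*3}}_{L^{2}(\R^{2})}^{2}
\]
by Plancherel, and rewrite this as a Hermitian sextilinear form
\[
\Phi\scp*{f} = \sum_{n_1+n_2+n_3=m_1+m_2+m_3} K\scp*{n_1,n_2,n_3;m_1,m_2,m_3}\, a_{n_1}a_{n_2}a_{n_3}\overline{a_{m_1}a_{m_2}a_{m_3}},
\]
whose kernel $K$ comes from integrating the arc-length measure on $\mathbb{S}^{1}$ against triples with prescribed vector sum and can be evaluated in closed form. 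Combined with $\norm*{f}_{L^{2}}^{2}=\sum\abs{a_n}^{2}$ and the reality constraint $a_{-n}=\overline{a_n}$, the theorem reduces to a polynomial inequality in the finite list of variables $\scp*{a_n}_{\abs{n}\le 120}$, constrained to a Euclidean sphere.

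Next I would Taylor-expand $\Phi\scp*{f}-\Phi\scp*{\bf{1}}$ about $f=\bf{1}$. Writing $f = a_0 + g$ with $\int g = 0$, solving for $a_0$ from the $L^{2}$-constraint, and regrouping, one obtains a sum of multilinear expressions in the non-constant coefficients of degrees $2$ through $6$. The quadratic part is the constrained Hessian of $\Phi$ at $\bf{1}$; by rotational invariance of $\Phi$, it is block-diagonal with blocks indexed by $\abs{n}$ and each of size at most $2$, so its spectrum can be read off directly. The first key step is to verify that all these ``Hessian eigenvalues'' $\lambda_n$ are strictly negative for $1\le n\le 120$, with a quantitative gap from $0$, thereby establishing strict local maximality at $\bf{1}$ and, in particular, the rigidity statement that equality forces constancy for $g$ small.

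The main obstacle is globalizing this local statement: the cubic through sextic terms in the expansion must be absorbed by the quadratic term for \emph{all} admissible $g$, not merely small ones. Here I would combine the Bernstein-type bound $\norm*{g}_{L^{\infty}}\lesssim N^{1/2}\norm*{g}_{L^{2}}$ with $N=120$ with sharp estimates on the cubic, quartic, quintic, and sextic forms, reducing the proof to verifying the positivity of a finite collection of explicit matrices whose entries are closed-form integrals on $\mathbb{S}^{1}$. The jump from $N=30$ in~\cite{oliveira2019band} to $N=120$ presumably rests on a more systematic use of the rotational symmetry to block-diagonalize these matrices and keep each block small enough to admit a rigorous numerical (interval-arithmetic) certificate; devising such a reduction, and controlling the accumulated rounding error across hundreds of blocks, is where I would expect the bulk of the work to lie.
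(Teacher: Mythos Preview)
Your high-level picture---reduce to a finite-dimensional polynomial problem and certify it by the positive-definiteness of certain computed matrices---matches the paper, but several concrete steps diverge from what actually happens, and one of them is a genuine error.

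First, the kernel $K$ is \emph{not} ``evaluated in closed form.'' After Plancherel and polar coordinates in $\R^{2}$, the entries become improper integrals
\[
I_{\mathbf{k}}=\int_{0}^{\infty}\prod_{j=1}^{6}J_{k_{j}}(r)\,r\,\mathrm{d}r
\]
of products of six Bessel functions. These have no closed form; the entire technical content of the present paper is a careful numerical scheme (Gauss--Legendre quadrature on $[0,S]$ and $[S,T]$, a four-term asymptotic expansion on $[T,\infty)$) with rigorous error bounds, good enough that the entrywise error propagated to the operator norm stays below the smallest eigenvalue.

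Second, the reduction to positive semi-definiteness of the matrices $(Q_{\mathbf{m},\mathbf{n}})_{\mathbf{m},\mathbf{n}\in X_{D}}$ is taken wholesale from~\cite{oliveira2019band}; it is \emph{not} a Taylor expansion at $\mathbf{1}$ followed by a Bernstein-type absorption of the higher-order terms. A bound like $\norm{g}_{L^{\infty}}\lesssim N^{1/2}\norm{g}_{L^{2}}$ would introduce powers of $N$ in the cubic through sextic pieces that the Hessian gap cannot absorb for $\norm{g}_{2}$ of order one, so the globalization you sketch does not close as stated.

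Third, you misidentify the source of the jump from $N=30$ to $N=120$. There is no new symmetry reduction; the block-diagonalization by $D=\sum m_{i}$ is exactly the same as in~\cite{oliveira2019band}, and the blocks are large (up to $1860\times1860$ for $N=120$), not ``small enough.'' The gain is purely numerical: switching from Newton--Cotes to Gauss--Legendre quadrature, using more terms of the Bessel asymptotics, and running arbitrary-precision interval arithmetic on a cluster. About $1.6\times 10^{8}$ distinct Bessel integrals are computed, each to error below $10^{-9}$, and that precision is what makes the eigenvalue certificate go through.
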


Here, $\mathbb{S}^1$ is the unit circle in the complex plane, and
$\widehat{f}_n$ are the coefficients in the Fourier series
\[f\scp*{\omega}=\sum_{n\in {\mathbb{Z}}} \widehat{f}_n\omega^n .\] The
Tomas-Stein functional
\begin{equation*}
  \Phi\scp*{f} \vcentcolon=
  \norm*{\widehat{f\sigma}}^6_{L^6\scp*{\R^2}}
  \norm*{f}_{L^2\scp*{\mathbb{S}^1}}^{-6}
\end{equation*}
is the sixth power of the norm quotient for the Fourier extension map
\begin{equation*}
  \widehat{f\sigma}\scp*{x} \vcentcolon=
  \int_{\mathbb{S}^1} f\scp*{\omega}\,e^{-i x\cdot\omega}\id{\sigma_\omega},
  \;\;\;
  \scp*{x\in\R^2},
\end{equation*}
where we identify the complex plane with the Euclidean plane $\R^2$
when we take the dot product $x\cdot \omega$, and $\sigma$ is the
arclength measure on the circle. The constant function ${\bf 1}$ is
conjectured to extremize the functional $\Phi$ among all functions in
$L^2\scp*{\mathbb{S}^1}$.

We refer to~\cite{carneiro2017sharp},~\cite{oliveira2019band},
and~\cite{foschi2017some} for further background on the sharp Fourier
restriction and extension problems.  In particular, it is known that
real-valued maximizers of the functional $\Phi$ do not change sign and
are antipodally symmetric. Hence, it suffices to show the variant of
Theorem~\ref{thm:main} for non-negative and antipodally symmetric
functions as in~\cite{oliveira2019band}.

Since $f$ in Theorem~\ref{thm:main} is assumed to be real-valued, we
have $\hat{f}_{n}=\overline{\hat{f}_{-n}}$, so that the Fourier modes
vanish for $\abs*{n}>120$.  Theorem~\ref{thm:main} thus concerns finite set
of Fourier modes and turns the extremizing problem into a finite
dimensional problem. This makes it accessible to numerical
computation. As in~\cite{oliveira2019band}, the theorem is reduced to
demonstrating positive semi-definiteness of each of the $3N/2+1$
matrices
\begin{equation}\label{qmatrices}
  {\scp*{Q_{{\bf m}, {\bf n}}}}_{{\bf m}, {\bf n}\in X_{D}}
\end{equation}
with $0\le D \le 3N$ an even number,
\[X_D=
\Set*{ \scp*{m_1, m_2, m_3}\in {\scp*{2\mathbb{Z}}}^3\setminus \{\scp*{0, 0, 0}\}
  \given \substack{m_1+m_2+m_3=D,\\
    \abs*{m_1}, \abs*{m_2}, \abs*{m_3}\le N,\\
    m_{1} \leq m_{2} \leq m_{3} }},
\]
and, writing $S_3$ for the group of permutations of three elements and
\[
  {\bf n}_\sigma=\scp*{n_{\sigma\scp*{1}}, n_{\sigma\scp*{2}}, n_{\sigma\scp*{3}}}, \quad
  \sigma\in S_{3},
\]
we set
\begin{align*}
Q_{{\bf m}, {\bf n}} &\vcentcolon= \frac 16 \sum_{\sigma\in S_3} \scp*{R_{{\bf m}, {\bf n_\sigma}}-L_{{\bf m}, {\bf n_\sigma }}},\\
L_{{\bf m}, {\bf n}} &\vcentcolon= 2I_{{\bf m}, -{\bf n}}+\sum_{\sigma\in S_3}
  I_{{\bf m}, {-\bf n}+{\scp*{1, -1, 0}}_\sigma},\\
R_{{\bf m}, {\bf n}} &\vcentcolon= 2I_{{\bf m}-{\bf n}, \scp*{0, 0, 0}}+\sum_{\sigma\in S_3} I_{{\bf m}-{\bf n}, {\scp*{1, -1, 0}}_\sigma},\\
  I_{{\bf m}, {\bf n}} &\vcentcolon= I_{\bf{k}} \vcentcolon= \int_0^\infty   \prod_{j=1}^6 J_{k_j}\scp*{r} r \id{r},
\end{align*}
where $\bf{k} \in \Z^6$ in the final definition is the concatenation
of $\bf{m}$ and $\bf{n}$, and the Bessel function $J_k$ is defined by
\[
  \int_{\mathbb{S}^1} \omega^k e^{-ix\cdot\omega} \id{\sigma_\omega}
  = 2\pi {\scp*{-i}}^k J_k\scp*{\abs{x}}{\scp*{x/\abs{x}}}^k.
\]
Note that in the formulas for $L_{{\bf m}, {\bf n}}$ and
$R_{{\bf m}, {\bf n}}$ given in~\cite{oliveira2019band}, ${\bf n}$
should be replaced by $-{\bf n}$ on the right-hand side. This error is
corrected here.

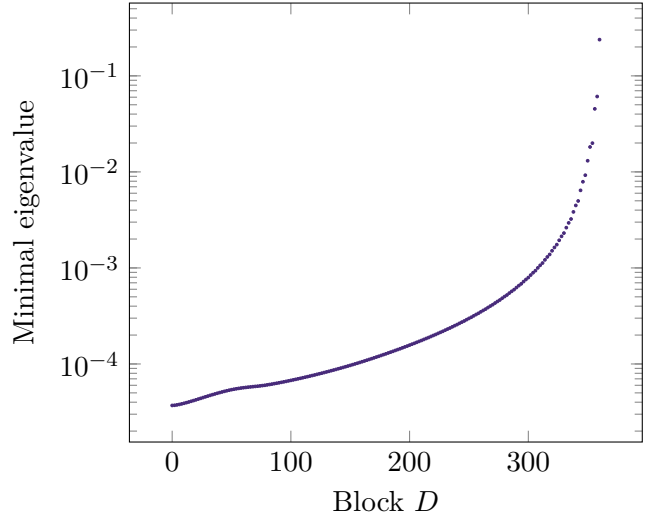
\begin{figure}
  {\centering
    \begin{tikzpicture}
      \begin{semilogyaxis}[
        scale only axis,
        width=0.8*\axisdefaultwidth,
        xlabel=Block $D$,
        ylabel=Minimal eigenvalue,
        cycle list={[indices of colormap={2} of colormap/viridis]},
        ]
        \addplot+ [only marks, mark size=0.5pt] table {figure_data/figure_1.dat};
      \end{semilogyaxis}
    \end{tikzpicture}
  }
  \caption{Minimal eigenvalue for each block $0 \leq D \leq 360$, $N=120$.}%
  \label{fig:smallest_evs}
\end{figure}

The main computational task in the proof of the theorem is the
numerical approximation of the various integrals $I_{\bf k}$. The
number of such integrals increases as the fifth power of $N$;
approximately $2.1 \times 10^5$ distinct integrals (up to sign and
permutation of ${\bf k}$) must be calculated for the $N=30$ case,
increasing to approximately $1.6 \times 10^8$ for the $N=120$ case. In
Section~\ref{numest}, we describe carefully the numerical scheme used
to approximate each ${I_{\bf k}}$ and estimate the associated error.
This scheme is an improved version of the scheme given
in~\cite{oliveira2019band}, requiring fewer arithmetic operations per
integral. We also discuss the adjustment of the parameters in the
scheme for arbitrary $N$.

Due to the high level of precision required, the various calculations
used in the proof were performed using arbitrary-precision arithmetic
as implemented in the Arb numerical library~\cite{johansson2017arb};
when used carefully, such arithmetic provides rigorous error bounds
for the output of calculations. The calculation code was written in
C++, employing hybrid parallelization through the use of both OpenMP
and MPI\@. Computations were performed in parallel using 94 nodes of a
modern compute cluster, requiring approximately 16 hours of wall time.
Weights and points used for Gauss-Legendre quadrature were calculated
using Mathematica~\cite{Mathematica} to high precision and imported
into the main calculation code by hand. The calculations can be
reproduced using the code given in the supporting information; a copy
of the output of this code, showing the approximated eigenvalues of
the matrices~\eqref{qmatrices}, is also given. Some results (plots,
etc.) outside the scope of the main proof were calculated by
post-processing the calculated integrals ${\tilde{I}_{\bf k}}$ using
standard numerical software, as the precision requirements are in
context less stringent.

\begin{figure}
  {\centering
    \begin{tikzpicture}
      \begin{loglogaxis}[
        scale only axis,
        width=0.8*\axisdefaultwidth,
        xlabel=$N$,
        ylabel=Minimal eigenvalue,
        scaled ticks=false,
        xtick={20, 40, 60, 80, 100, 120},
        xticklabel={
          \pgfkeys{/pgf/fpu=true}
          \pgfmathparse{exp(\tick)}%
          \pgfmathprintnumber[fixed relative]{\pgfmathresult}
          \pgfkeys{/pgf/fpu=false}
        },
        cycle list={[indices of colormap={2,9} of colormap/viridis]},
        ]
        \addplot+[only marks, mark size=0.5pt] table {figure_data/figure_2.dat};
        \addlegendentry{Eigenvalues}

        \addplot+[
        domain=20:120,
        dashed,
        thick,
        no markers,
        ] {0.153 * x^-1.74};
        \addlegendentry{$0.15N^{-1.74}$};
      \end{loglogaxis}
    \end{tikzpicture}
  }
  \caption{Minimal eigenvalues for the $D=0$ matrix in~\eqref{qmatrices}
    with $20 \leq N \leq 120$.}%
  \label{fig:smallest_ev_D_0_N_varies}
\end{figure}
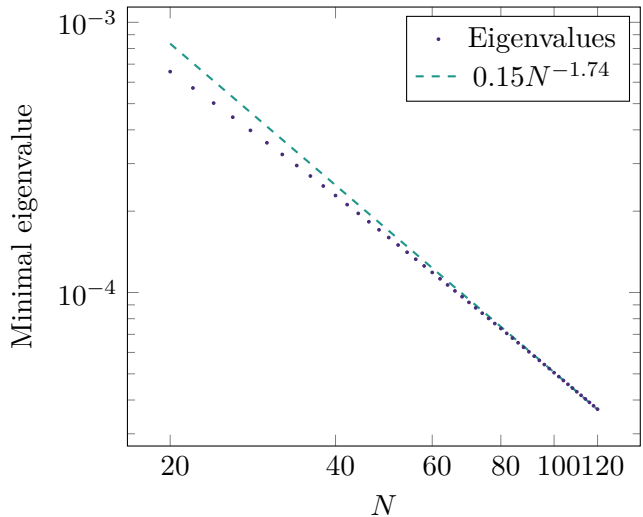

In Section~\ref{numresults}, we describe the results of these
computations, give an assessment of the eigenvalues, and discuss
various plots of matrix entries and eigenfunctions. The quality of the
experimentally-obtained information about the matrices
in~\eqref{qmatrices} and their eigenvalues and eigenfunctions has
substantially improved compared to the computations
in~\cite{oliveira2019band}. Phenomena are seen with much better
resolution and allow further investigation. In particular, we obtain
positivity of all eigenvalues in question, as shown to sufficient
accuracy in Figure~\ref{fig:smallest_evs}, which implies
Theorem~\ref{thm:main} by the reductions described
in~\cite{oliveira2019band}.\!\footnote{During the preparation of the
  final version of this paper, the authors became aware of a minor but
  non-negligible error in previous preprint versions, namely that an
  incorrect value of $T$ was used in the numerical calculation of the
  tail approximation $\tilde{I}^{T, \infty}_{\bf k}$ in
  \eqref{eq:SplitII}. The error has been corrected, and all values,
  plots, etc.\ have been recalculated and updated accordingly.}

Figure~\ref{fig:smallest_evs} shows that $D=0$ has the smallest
eigenvalue among the matrices in~\eqref{qmatrices} for $N=120$. It is
cheap to conjecture that the analogous statement holds for arbitrarily
large $N$. In addition, Figure~\ref{fig:smallest_ev_D_0_N_varies}
suggests the conjecture that the smallest eigenvalue of the matrix
$D=0$ has a power decay, possibly of the order $N^{-7/4}$, and is in
particular positive. These conjectures would imply the analogue to
Theorem~\ref{thm:main} for arbitrarily large $N$, and with it the
general conjecture about constant functions maximizing the Tomas-Stein
functional in $L^2\scp*{\mathbb{S}^1}$.

\section{Numerical estimation of \texorpdfstring{$I_{\bf k}$}{I{\_}k} and error bounds}%
\label{numest}

As in~\cite{oliveira2019band}, we approximate integrals $I_{\bf k}$ by
quantities $\tilde{I}_{\bf k}$ defined by approximating schemes.  We
split
\begin{equation}\label{eq:SplitI}
  \begin{split}
    I_{\bf k}
    &= {I}^{0, S}_{\bf k} + {I}^{S, T}_{\bf k} +{I}^{T, \infty}_{\bf k}\\
    &= \int_0^S \prod_{j=1}^6 J_{k_j}\scp*{r} r \id{r}
    +\int_S^T \prod_{j=1}^6 J_{k_j}\scp*{r} r \id{r}
    +\int_T^\infty \prod_{j=1}^6 J_{k_j}\scp*{r} r \id{r},
  \end{split}
\end{equation}
and correspondingly combine
\begin{equation}\label{eq:SplitII}
  \tilde{I}_{\bf k}=\tilde{I}^{0, S}_{\bf k}+\tilde{I}^{S, T}_{\bf k}+
  \tilde{I}^{T, \infty}_{\bf k},
\end{equation}
where the first two terms are quadrature rules with different
parameters approximating the corresponding compact integrals
in~\eqref{eq:SplitI}, and the third term is an exact integral over an
approximation to the integrand using asymptotic expansion. As opposed
to~\cite{oliveira2019band}, we use Gauss-Legendre quadrature instead
of Newton-Cotes quadrature on the first two pieces, and we use more
terms of the asymptotic expansion on the third integral. The cutoffs
$S$ and $T$ should be chosen well for numerical speed and accuracy.
Our discussion of the error bounds requires
$S\ge 0.95 N^{ 3/2}\ln\scp*{N}+1$ and $T>10N^2$. For $N=120$, in
accordance with these conditions, we choose $S=6{,}000$ and
$T=150{,}000$.

\subsection{Gauss-Legendre quadrature on the intervals
  \texorpdfstring{$\scs*{0, S}$}{[0,S]} and
  \texorpdfstring{$\scs*{S, T}$}{[S,T]}}

We cut each of the intervals $\scs*{0, S}$ and $\scs*{S, T}$ into
small intervals of constant length and use Gauss-Legendre quadrature
with $n=12$ points on each of these small intervals. We will estimate
the error of the quadrature using Lemma~\ref{abgauss} below. We first
quickly review the theory behind this lemma.

In $L^2\scp*{\scs*{-1, 1}}$, the even or odd real monic polynomial
\begin{equation}\label{rodrigues}
\frac{n!}{\scp*{2n}!} \partial_{x}^n {\scp*{x^2-1}}^n=\vcentcolon{} \prod_{i=1}^n \scp*{x-x_i}=\vcentcolon{}p_n\scp*{x}
\end{equation}
is orthogonal to all polynomials of lower degree (this is the Rodrigues formula for the Legendre polynomials with a different constant factor). This is seen by
$n$-fold partial integration, with boundary terms vanishing due to the
structure of $p_n$. The zeroes $x_1, \dots, x_n$ are distinct and
contained in $\scp*{-1, 1}$, or else there was a lower degree polynomial
with the same sign as $p_n$ on $\scs*{-1, 1}$, contradicting
orthogonality. The linear combination $x p_n -p_{n+1}$ has degree at
most $n$ and is orthogonal to all polynomials $p_k$ with $k<n-1$.  By
parity consideration, it is a multiple of $p_{n-1}$ with factor
determined by examination of the highest order coefficient:
\begin{align*}\label{ttrec}
  x p_n -p_{n+1}
  &= \scs*{\frac{-n^2\scp*{n-1}}{\scp*{2n}\scp*{2n-1}}-\frac{-{\scp*{n+1}}^2n}{\scp*{2n+2}\scp*{2n+1}}}p_{n-1}\\
  &= \frac {n^2}{\scp*{2n+1}\scp*{2n-1}} p_{n-1}.
\end{align*}
Pairing with $p_{n-1}$, using orthogonality relations, identifying the
$n$-th factor of the Wallis product, and solving the recursion yields
\[
  \int_{-1}^1 p_n^2\scp*{x}\id{x}
  = \frac 14 \frac {{\scp*{2n}}^2}{\scp*{2n-1}\scp*{2n+1}}
  \int_{-1}^1 p_{n-1}^2\scp*{x}\id{x}
  \le 2^{-2n}\pi.
\]

\begin{lemma}[Gauss-Legendre quadrature]\label{gauss}
  There are weights $w_i$ such that, for every function $f$ that is
  $2n$ times continuously differentiable on $\scs*{-1, 1}$, we have
  \begin{equation}\label{gaussbound}
    \abs*{\int_{-1}^1 f\scp*{y}\id{y}
      - \sum_{i=1}^n w_i f\scp*{x_i}}\le \pi 2^{-2n}
    \sup_{\xi \in \scs*{-1, 1}}\frac{\abs*{f^{\scp*{2n}}\scp*{\xi}}}{\scp*{2n}!}
  \end{equation}
\end{lemma}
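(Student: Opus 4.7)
The plan is the classical three-step argument for Gauss quadrature. First I would define the weights by Lagrange interpolation at the nodes $x_1,\dots,x_n$, i.e.\ $w_i := \int_{-1}^{1} \ell_i(x)\,dx$ with $\ell_i(x) = \prod_{j\neq i}(x-x_j)/(x_i-x_j)$, which makes the quadrature rule exact for polynomials of degree at most $n-1$ by construction. I would then bootstrap this to exactness on all polynomials of degree at most $2n-1$: writing such a polynomial $P$ as $P = q\,p_n + r$ with $\deg q,\deg r \le n-1$, the integral $\int_{-1}^1 q\, p_n$ vanishes by the orthogonality of $p_n$ to all polynomials of lower degree (established just above the lemma via the Rodrigues-type identity~\eqref{rodrigues}), while $\sum_i w_i\, q(x_i) p_n(x_i) = 0$ follows from $p_n(x_i) = 0$. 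Both sides collapse to $\int_{-1}^1 r = \sum_i w_i r(x_i)$.

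Next, given $f \in C^{2n}([-1,1])$, I would introduce the Hermite interpolant $H$, the unique polynomial of degree at most $2n-1$ with $H(x_i) = f(x_i)$ and $H'(x_i) = f'(x_i)$ for each $i$. Applying Rolle's theorem repeatedly to the auxiliary function $t \mapsto f(t) - H(t) - c\, p_n(t)^2$, with $c$ chosen to make an arbitrary fixed evaluation point $x$ an additional zero, yields the pointwise remainder formula
\[
  f(x) - H(x) = \frac{f^{(2n)}(\xi_x)}{(2n)!}\, p_n(x)^2, \qquad \xi_x \in (-1,1),
\]
in which $p_n^2$ (rather than $p_n$) appears because each node is a double interpolation point.

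To conclude, since $H$ has degree at most $2n-1$, the exactness from step one gives $\sum_i w_i f(x_i) = \sum_i w_i H(x_i) = \int_{-1}^1 H(y)\,dy$, so the quadrature error equals $\int_{-1}^1 (f-H)(y)\,dy$. Bounding this pointwise by the remainder formula and pulling the supremum outside yields
\[
  \left| \int_{-1}^1 f(y)\,dy - \sum_i w_i f(x_i) \right| \le \sup_{\xi \in [-1,1]} \frac{|f^{(2n)}(\xi)|}{(2n)!} \int_{-1}^1 p_n(y)^2 \,dy,
\]
and the estimate $\int_{-1}^1 p_n^2 \le \pi\, 2^{-2n}$ from the paragraph immediately preceding the lemma delivers~\eqref{gaussbound}. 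The only genuinely non-mechanical ingredient is the Hermite remainder formula; although entirely standard, it is the step where care is needed, and I would flag it as the main (mild) obstacle in an otherwise direct derivation from the orthogonality theory already set up above the lemma.
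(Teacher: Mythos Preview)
Your proposal is correct and follows essentially the same route as the paper: define the weights so that the rule is exact on polynomials of degree $\le n-1$, use orthogonality of $p_n$ and the vanishing $p_n(x_i)=0$ to upgrade to exactness on degree $\le 2n-1$, introduce the Hermite interpolant $H$ (the paper's $h$), and control $\int_{-1}^1 (f-H)$ via Rolle's theorem together with the bound $\int_{-1}^1 p_n^2 \le \pi 2^{-2n}$. The only cosmetic difference is in the final step: you derive the pointwise remainder $f(x)-H(x)=\frac{f^{(2n)}(\xi_x)}{(2n)!}p_n(x)^2$ and then integrate and take a supremum, whereas the paper first applies the integral mean value theorem to $\int k(y)p_n(y)^2\,dy$ (with $k=(f-h)/p_n^2$) to extract a single value $k(x)$ and then invokes Rolle once; both orderings yield the same bound.
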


\begin{proof}By regularity of the Vandermonde determinant, there are
  weights $w_i$ such that the left-hand-side of~\eqref{gaussbound}
  vanishes if $f$ is a polynomial of degree at most $n-1$. The
  left-hand-side also vanishes evidently for all polynomials of the
  form $x^k p_n$ with $k\le n-1$, and thus for all polynomials of
  degree at most $2n-1$.  For arbitrary $f$ as in the lemma, let $h$
  be the polynomial of degree $<2n$ such that $f-h$ vanishes of second
  order at all points $x_i$.  Then the function $k=\scp*{f-h}p_n^{-2}$ is
  continuous.  We estimate the left-hand-side of~\eqref{gaussbound}
  as:
  \begin{align*}
    \int_{-1}^1 \scp*{f-h}\scp*{y}\id{y}
    &= \int_{-1}^1 k\scp*{y}p_n{\scp*{y}}^2\id{y}\\
    &\le \abs*{k\scp*{x}} \int_{-1}^1 p_n{\scp*{y}}^2\id{y}\\
    &\le \pi 2^{-2n} \abs*{k\scp*{x}}
  \end{align*}
  for some $x\in \scs*{-1, 1}$. By Rolle's theorem, there is a
  $\xi\in \scs*{-1, 1}$ with
  \[
    0
    =\partial_\xi^{2n}(f-h-k\scp*{x}{p}_n^2)\scp*{\xi}
    =f^{\scp*{2n}}\scp*{\xi}- k\scp*{x} \scp*{2n}!
    \qedhere
  \]
\end{proof}
\begin{lemma}\label{abgauss}
  Assume $f$ is analytic on the union of balls of radius $1$ about
  each of the points of the interval $\scs*{a, b}$ and bounded by $F$ on
  this union. Then
  \[
    \abs*{
      \int_a^b f\scp*{x}\id{x}
      -\sum_{i=1}^n w_{i}\frac{b-a}2 f\scp*{\frac {a+b}2 +\frac {b-a}2 x_i}}
    \le  \pi 2^{-2n}{\scp*{\frac{\abs*{b-a}}2}}^{2n+1} F .\]
\end{lemma}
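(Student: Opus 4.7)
The plan is a standard rescaling plus a Cauchy derivative estimate. First I would reduce to the interval $[-1,1]$ by the affine change of variables $y = \frac{2}{b-a}(x-\frac{a+b}{2})$, setting $g(y) \vcentcolon= f\scp*{\frac{a+b}{2}+\frac{b-a}{2}y}$. Then
\[
  \int_a^b f\scp*{x}\id{x} - \sum_{i=1}^n w_i \frac{b-a}{2} f\scp*{\tfrac{a+b}{2}+\tfrac{b-a}{2}x_i}
  = \frac{b-a}{2}\scp*{\int_{-1}^1 g\scp*{y}\id{y} - \sum_{i=1}^n w_i g\scp*{x_i}},
\]
so Lemma~\ref{gauss} applied to $g$ (which inherits $2n$-fold differentiability from $f$) gives an upper bound of $\frac{|b-a|}{2}\cdot \pi 2^{-2n}\sup_{\xi\in[-1,1]} |g^{(2n)}(\xi)|/(2n)!$ for the absolute value of the error.

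Next I would compute the derivative by the chain rule: $g^{(2n)}(\xi) = \scp*{\frac{b-a}{2}}^{2n} f^{(2n)}\scp*{\frac{a+b}{2}+\frac{b-a}{2}\xi}$, where the argument lies in $[a,b]$. This pulls out the factor $\scp*{\frac{|b-a|}{2}}^{2n}$ that, together with the earlier $\frac{|b-a|}{2}$, produces the $\scp*{\frac{|b-a|}{2}}^{2n+1}$ in the claimed bound. It remains to show $\abs*{f^{(2n)}(z_0)}/(2n)! \le F$ for every $z_0\in[a,b]$.

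For this last step I would invoke Cauchy's integral formula on the unit disk around $z_0$, which is contained in the domain where $f$ is analytic and bounded by $F$:
\[
  \frac{f^{(2n)}\scp*{z_0}}{\scp*{2n}!}
  = \frac{1}{2\pi i}\oint_{\abs{z-z_0}=1} \frac{f\scp*{z}}{{\scp*{z-z_0}}^{2n+1}}\id{z},
\]
whose modulus is bounded by $F$ since the circle has length $2\pi$, the denominator has modulus $1$, and $|f|\le F$ on the circle. Assembling these three bounds yields
\[
  \abs*{\text{error}} \le \frac{\abs*{b-a}}{2}\cdot \pi 2^{-2n}\cdot {\scp*{\tfrac{\abs*{b-a}}2}}^{2n}\cdot F,
\]
which is exactly the claim. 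The only nontrivial ingredient is the Cauchy estimate, and it fits the hypothesis (balls of radius $1$, uniform bound $F$) with no slack to spare.
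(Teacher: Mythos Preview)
Your proof is correct and follows essentially the same approach as the paper: reduce to $[-1,1]$ by an affine change of variables, apply Lemma~\ref{gauss}, and control the $2n$-th derivative via Cauchy's integral formula. The only cosmetic difference is that the paper applies Cauchy to the rescaled function on circles of radius $2/|b-a|$, whereas you first pull the factor $\scp*{\tfrac{b-a}{2}}^{2n}$ out via the chain rule and then apply Cauchy to the original $f$ on the unit circle; these are the same estimate in different coordinates.
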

\begin{proof}
  This follows by translation and dilation of $f$ from the case
  $\scs*{a, b}=\scs*{-1, 1}$ with balls of radius $1$ replaced by
  balls of radius $2/\abs*{b-a}$.  The estimate in case $\scs*{-1, 1}$
  follows from Lemma~\ref{gauss} when estimating
  $\frac{f^{\scp*{2n}}\scp*{\xi}}{2n!}$ with Cauchy's integral formula as an
  average of the analytic function $f$ over the circle of radius
  $2/\abs*{b-a}$ about the point $\xi$.\end{proof}

On the interval $\scs*{0, S}$, we use the bound
\[
  \abs{J_n\scp*{z}}\le e^{\abs{\Im\scp*{z}}}
\]
on the strip $\Im\scp*{z}\le 1$, obtained as reviewed in Section 2
of~\cite{e2015estimates} from the integral representation
\[
  J_n\scp*{z}=\frac 1{2\pi}\int_0^{2\pi} e^{iz\sin\scp*{\theta}} e^{in\theta}\id{\theta}.
\]
Hence, the function
\begin{equation}\label{functionf}
  f\scp*{z}=z \prod_{i=1}^6 J_{n_i}\scp*{z}
\end{equation}
is bounded in absolute value by $\abs*{z}e^{6} $ on this strip.  Cutting
the interval $\scs*{0, S}$ into $K$ intervals of length $2d=S/K$ and
using Gauss-Legendre quadrature as in Lemma~\ref{abgauss} on each
interval gives
\begin{equation}\label{eq:gauss-nueric-error-on-[0, S]}
  \begin{split}
  \abs*{I_{\bf k}^{0, S}-\tilde{I}_{\bf k}^{0, S}}
  &= \abs*{
    \int_{0}^S{f}\scp*{z}\id{z}
  - \sum_{j=0}^{K-1} d \sum_{i=1}^n{w}_i f(d\scp*{2j+1}+dx_i)}\\
  &\le \pi 2^{-2n} d^{2n+1} e^6 \sum_{j=0}^{K-1} (2d\scp*{j+1}+1)\\
  &\le \pi 2^{-2n-1} d^{2n} e^6 \int_0^S \scp*{x+2d+1} \id{x}\\
  &\le \pi 2^{-2n-1} d^{2n}e^6 {\scp*{S+2d+1}}^2\\
  &\le 0.000038 \times {\scp*{S+2d+1}}^2 d^{24}\\
  &\le 0.1 \times 10^{-10}.
  \end{split}
\end{equation}
Here we have used $n=12$ in the penultimate and $S=6{,}000$ and
$d=0.25$ in the ultimate inequality.  Note that this estimate does not
depend on particular assumptions on $S$, and our parameters lead to an
algorithm with $144{,}000$ evaluations of the integrand.

On the interval $\scs*{S, T}$, we recall from~\cite[Section
2]{e2015estimates} the following representation for $J_n$, which
arises through the change of variables $t=\cos\scp*{\theta}$ from the
Poisson integral:
\[
  J_n\scp*{z}
  = \frac{{\scp*{z/2}}^n}{\Gamma\scp*{n+1/2}\Gamma\scp*{1/2}}
  \int_{-1}^1 \cos\scp*{z t}{\scp*{1-t^2}}^{n-1/2} \id{t}.
\]
We split $J_n=\frac 12 \scp*{J_n^++J_n^-}$, where
$J_n^+\scp*{\overline{z}}=\overline{J_n^-\scp*{z}}$ and
\[
  J_n^+\scp*{z}
  =\frac  {{\scp*{z/2}}^n}{\Gamma\scp*{n+1/2}\Gamma\scp*{1/2}}
  \int_{-1}^1 e^{i z t}{\scp*{1-t^2}}^{n-1/2} \id{t}.
\]
Indeed, $J_n=J_n^+$ due to symmetry of the weight. A change of the contour integral leads to
\begin{equation}\label{jplus}
  J_n^+\scp*{z}=\frac {{\scp*{2\pi z}}^{-1/2}}{ \Gamma\scp*{\nu+1}}
  \int_0^\infty e^{-u} u^{\nu}
  \scs*{e^{-i\omega}{\scp*{1-\frac {iu}{2z}}}^\nu
    +  e^{i\omega}{\scp*{1+\frac {iu}{2z}}}^\nu} \id{u}
\end{equation}
with the abbreviations
\begin{align*}
  \nu &\vcentcolon=n-\frac{1}{2},\\
  \omega &\vcentcolon=z-\frac \pi 4 -\frac{n\pi}2.
\end{align*}
We split further $J_n^+=\frac 12 \scp*{J_n^{++} + J_n^{+-}}$ with
$J_n^{+-}\scp*{\overline{z}}=\overline{J_n^{++}\scp*{z}}$ and
\begin{equation}\label{jplusplus}
  J_n^{++}\scp*{z}={\scp*{\frac 2{\pi z}}}^{1/2} \frac {e^{i\omega}}{\Gamma\scp*{\nu+1}}
  \int_0^\infty e^{-u} u^{\nu}{\scp*{1+\frac {iu}{2z}}}^\nu \id{u}.
\end{equation}

\begin{lemma}\label{stlemma}
  Assume $N\ge 20$ and $0\le n\le N$. Assume
  $\abs*{\Re\scp*{z}}> 0.95 N^{\frac 32}\ln\scp*{N}$ and $\abs*{\Im\scp*{z}}\le 1$.  Then
  \[\abs*{J_n^{++}\scp*{z}} \le 3.36 \abs*{z} ^{-1/2}.\]
  The analogous estimate holds for $J_n^{+-}$, $J_n^+$, $J_n^-$, and $J_n$.
\end{lemma}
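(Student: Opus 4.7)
The plan is to take absolute values throughout the integral representation \eqref{jplusplus}. Using $\abs*{e^{i\omega}} = e^{-\Im z} \leq e$ when $\abs*{\Im\scp*{z}}\leq 1$, the claim reduces to showing that the normalized integral
\[
\mathcal{I}(z) \vcentcolon= \frac{1}{\Gamma(\nu+1)}\int_0^\infty e^{-u} u^\nu \abs*{1 + iu/(2z)}^\nu \id{u}
\]
satisfies $\mathcal{I}(z) \le 3.36/\scp*{e\sqrt{2/\pi}} \approx 1.55$ on the region in question.

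Writing $z = x + iy$ with $\abs*{y}\leq 1$, a direct calculation gives $\abs*{1+iu/(2z)}^2 = 1 + uy/\abs*{z}^2 + u^2/(4\abs*{z}^2)$, and combining $\abs*{y}\leq 1$ with $\log(1+t)\leq t$ yields the pointwise bound
\[
\abs*{1+iu/(2z)}^\nu \leq \exp\!\scp*{\frac{\nu u}{2\abs*{z}^2} + \frac{\nu u^2}{8\abs*{z}^2}}.
\]
The quadratic-in-$u$ term prevents a direct global use of this estimate, so I split the integration at $u = U \vcentcolon= 2\nu$. On $[0,U]$, replace $u^2$ by $Uu$ to linearize the exponent, producing $e^{-u(1-\delta)}$ with $\delta \vcentcolon= \nu/(2\abs*{z}^2) + \nu U/(8\abs*{z}^2)$; the contribution is then at most $\Gamma(\nu+1)(1-\delta)^{-(\nu+1)}$. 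The hypotheses $\abs*{z} > 0.95 N^{3/2}\ln\scp*{N}$ and $\nu \leq N-1/2$ force $(\nu+1)\delta = O\scp*{1/(\ln N)^2}$, whence this factor exceeds $1$ only slightly. On $[U,\infty)$, I switch to the coarser estimate $\abs*{1+iu/(2z)}^\nu \leq e^{\nu u/(2\abs*{z})}$ coming from $\log\abs*{1+w}\leq \abs*{w}$; since $U = 2\nu$ sits well beyond the mean of the associated Gamma-type density, standard incomplete-Gamma tail estimates give a contribution that is exponentially small in $\nu$, of order $(2/e)^\nu$.

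Adding the two pieces yields $\mathcal{I}(z) \leq 1 + O\scp*{1/(\ln N)^2}$; checking explicitly that this stays below $1.55$ for all admissible $N\geq 20$ and $0\leq n\leq N$ proves the claim for $J_n^{++}$. The analogous estimates for $J_n^{+-}$, $J_n^+$, $J_n^-$, and $J_n$ then follow from $J_n^{+-}(\overline z) = \overline{J_n^{++}(z)}$, the decomposition $J_n^+ = (J_n^{++}+J_n^{+-})/2$, and the identity $J_n = J_n^+$ noted after \eqref{jplus}. The main obstacle is careful tracking of constants: the irreducible prefactor $\sqrt{2/\pi}\cdot e \approx 2.17$ leaves only a multiplicative slack of roughly $1.55$ against the target $3.36$, so the split point $U$ and the correction factor $(1-\delta)^{-(\nu+1)}$ must be controlled tightly enough to fit under this slack at the boundary case $N=20$, where the decay $1/(\ln N)^2$ is weakest.
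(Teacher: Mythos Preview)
Your overall strategy---bound the integral in \eqref{jplusplus} by splitting the $u$-range---matches the paper's, but the execution has a genuine gap for small $n$.

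For $n=0$ you have $\nu=-1/2$, so the split point $U=2\nu=-1$ is negative and the argument is vacuous; moreover, multiplying $\log(1+t)\le t$ by $\nu<0$ reverses the inequality, so your pointwise bound on $\abs{1+iu/(2z)}^{\nu}$ fails in this case. A \emph{lower} bound on $\abs{1+iu/(2z)}$ is needed instead. For small positive $\nu$ the tail is not ``exponentially small in $\nu$'': at $n=1$, $\nu=1/2$, $U=1$, with $\delta,\,1-\alpha$ negligible your estimate reads
\[
(1-\delta)^{-3/2}+\alpha^{-3/2}\,\frac{\Gamma(3/2,1)}{\Gamma(3/2)}\ \approx\ 1+0.57\ =\ 1.57\ >\ 1.55,
\]
since $\Gamma(3/2,1)/\Gamma(3/2)\approx 0.572$. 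The overshoot is structural: you extend $\int_0^U$ to $\int_0^\infty$ and then add $\int_U^\infty$ again with a larger majorant, and the claimed order $(2/e)^{\nu}$ is only the large-$\nu$ asymptotic of the incomplete-Gamma ratio, with no uniform smallness when $\nu=O(1)$.

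The paper avoids both problems by splitting at $u=2N\ln N$ independently of $\nu$. On $[0,2N\ln N]$ it shows $\abs{1+iu/(2z)}\le 1+1/N$ (and $\ge 1/2$, which handles $\nu=-1/2$), so that $\abs{1+iu/(2z)}^{\nu}\le e$ for every $0\le n\le N$; this gives only $\mathcal I(z)\le e+\tfrac1{100}\approx 2.73$, but the constant $3.36$ is then recovered using $\abs{e^{i\omega}}\le\cosh(1)$ rather than $e$. That last bound is really the one for $J_n^{+}$ (where $e^{i\omega}$ and $e^{-i\omega}$ pair up), and since the application only uses $J_n=J_n^{+}$, you may as well aim directly for $J_n^{+}$: then the target for $\mathcal I(z)$ relaxes from $1.55$ to about $2.73$, and a $\nu$-independent split of the paper's type closes the argument without the small-$n$ difficulties.
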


\begin{proof}
  We first estimate the part of the integral in~\eqref{jplusplus} from
  $0$ to $2N\ln\scp*{N}$.  We estimate in this range
  \begin{equation*}
    \begin{split}
      \frac 12
      &\le \abs*{1 + \frac{iu}{2z}}=
      \abs*{1 + \frac{u\Im\scp*{z}}{2\abs*{z}^2} +  i \frac{ u\Re\scp*{z}}{2\abs*{z}^2}}\\
      &\le \sqrt{{\scp*{1+\frac 1 {0.9 N^2 \ln\scp*{N} }}}^2+\frac 1 {0.9 N}}\\
      &\le 1+\frac 1{N},
    \end{split}
  \end{equation*}
  where the lower bound by $\frac 1 2$ will only be used if
  $\nu$ is negative, that is $\nu =-1/2$.  We obtain
  \begin{equation*}
    \begin{split}
    \abs*{
      \int_0^{2N\ln\scp*{N}} e^{-u} u^{\nu} {\scp*{1+\frac {iu}{2z}}}^\nu \id{u}}
    &\le e \int_0^{\infty} e^{-u} u^{\nu} \id{u}\\
    &= e \Gamma\scp*{\nu+1}.
    \end{split}
  \end{equation*}
  Turning to the part of the integral in~\eqref{jplusplus} from
  $2N\ln\scp*{N} $ to $\infty$, we estimate
  \[
    \abs*{1\pm \frac{iu}{2z}}^\nu\le {\scp*{ \frac u N }}^{\nu+2}.
  \]
  Hence we have
  \[
    \abs*{
      \int_{2N\ln\scp*{N}}^\infty e^{-u} u^{\nu}{\scp*{1+\frac {iu}{2z}}}^\nu} \id{u}
    \le
    e^{-N\ln\scp*{N}} \int_{0}^\infty e^{-u/2} u^\nu {\scp*{\frac u N}}^{\nu+2} \id{u}
  \]
  \[
    = N^{-{N}-\nu-2} 2^{2\nu+3}  \Gamma\scp*{2\nu + 3} \le \frac 1{100} \Gamma\scp*{\nu+1}.
  \]
  With $\abs*{e^{i\omega}}\le \cosh\scp*{\abs*{\Im z}}\le \cosh\scp*{1}$ it follows that

  \[\abs*{J_n^+\scp*{z}}\le {\scp*{\frac 2{\pi \abs*{z}}}}^{1/2} \cosh\scp*{1}\scp*{\frac 1{100} +  e } \le 3.36 \abs*{z}^{-1/2} .\]
  The analogous estimates for the other variants of Bessel's function
  are clear.
\end{proof}

With Lemma~\ref{stlemma}, we estimate the function~\eqref{functionf}
on the strip $\Im\scp*{z}\le 1$ and
$\Re\scp*{z}\ge 0.95 N^{\frac 32 }\ln\scp*{N}$ in absolute value by
${\scp*{3.36}}^6 \abs*{z}^{-2} $. Cutting the interval $\scs*{S, T}$
into $K$ intervals of length $2d=\scp*{T-S}/K$ and using
Gauss-Legendre quadrature as in Lemma~\ref{abgauss} on each
interval, we obtain
\begin{equation}\label{eq:gauss-nueric-error-on-[S, T]}
  \begin{split}
    \abs*{I_{\bf k}^{S, T}-\tilde{I}_{\bf k}^{S, T}}
    &= \abs*{
      \int_{S}^T{f}\scp*{z}\id{z}
      -\sum_{j=0}^{K-1} d \sum_{i=1}^n{w}_i f(S+d\scp*{2j+1}+dx_i) }\\
    &\le \frac{\pi}{ 2^{2n}} d^{2n+1} {\scp*{3.36}}^6 \sum_{j=0}^{K-1} {\scp*{S+2dj-1}}^{-2}\\
    &\le \frac{\pi}{2^{2n+1}} d^{2n} {\scp*{3.36}}^6 \int_S^T {\scp*{x-1-2d}}^{-2} \id{x}\\
    &\le \pi 2^{-2n-1} d^{2n}{\scp*{3.36}}^6 {\scp*{S-1-2d}}^{-1}\\
    &\le 0.000135 \times {\scp*{S-1-2d}}^{-1} d^{24}\\
    &\le 1.1 \times 10^{-10}.
  \end{split}
\end{equation}
Here we used $n=12$ in the penultimate and $S=6{,}000$ and $d=0.8$ in
the ultimate inequality. The use of Lemma~\ref{stlemma} here requires
$S \ge 0.95 N^{3/2}\ln\scp*{N}+1$, which is satisfied with $N=120$ and
$S=6{,}000$.  Assuming $T=150{,}000$, this amounts to $1{,}080{,}000$
evaluations of the integrand.

\subsection{Asymptotic approximation on the interval  \texorpdfstring{$\left[T, \infty\right)$}{[T, infinity)}}

We present a precise error bound for the classical asymptotic
expansion of order four of Bessel functions, a slight refinement of
the corresponding discussion in Section 2 of~\cite{e2015estimates}.

\begin{lemma}
  Assume $N\ge 20$ and $n\le N$. Assume $z$ is real and
  $z> 2 N^{\frac 32} \ln\scp*{N} $.  Then
  \begin{equation}\label{4order}
    \begin{split}
  \scp*{\frac {\pi z}2}^{\frac 12}
  J_n^{++} \scp*{z}e^{-i\omega}
  &=
  1+ i \frac{\Gamma\scp*{\nu+2} }{2\Gamma\scp*{\nu}z} - \frac{\Gamma\scp*{\nu+3} }{8 \Gamma\scp*{\nu-1} z^2}\\
  &\qquad{}- i \frac{\Gamma\scp*{\nu+4}}{48 \Gamma\scp*{\nu-2}z^3}
  +R
  \end{split}
  \end{equation}
  with
  \[\abs*{R}\le 0.0043 \frac{N^8}{z^4}.\]
\end{lemma}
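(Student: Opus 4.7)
The plan is to apply Taylor's theorem with integral remainder to $(1+w)^\nu$ at $w=0$, substitute $w = iu/(2z)$ into the integral representation~\eqref{jplusplus}, integrate the cubic polynomial part term by term against $e^{-u}u^\nu$ using the Gamma integral, and carefully estimate the fourth-order remainder.

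Specifically, I would start from
\[
  (1+w)^\nu = \sum_{k=0}^{3} \binom{\nu}{k} w^k + 4\binom{\nu}{4} w^4 \int_0^1 (1-t)^3 (1+tw)^{\nu-4}\, dt,
\]
and use $\int_0^\infty e^{-u} u^{\nu+k}\, du = \Gamma(\nu+k+1)$. Dividing by $\Gamma(\nu+1)$ and invoking the identity $\binom{\nu}{k}/\Gamma(\nu+1) = 1/(k!\,\Gamma(\nu-k+1))$, the $k$-th polynomial term becomes $\Gamma(\nu+k+1)(i/(2z))^k/(k!\,\Gamma(\nu-k+1))$, which for $k=0,1,2,3$ matches the four explicit terms on the right-hand side of~\eqref{4order}.

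To control $R$, I would split the $u$-integration at $U = 2N\ln N$. On $[0,U]$ the assumption $z > 2N^{3/2}\ln N$ gives $|w| = u/(2z) \le 1/(2\sqrt N)$ and hence $|1+tw|^2 = 1 + (tu/(2z))^2 \le 1 + 1/(4N)$; whether $\nu-4$ is positive or negative, this bounds $|(1+tw)^{\nu-4}|$ by $(1+1/(4N))^{N/2} \le e^{1/8}$. Combined with $\int_0^1(1-t)^3\,dt = 1/4$, this yields $|R_4(w)| \le e^{1/8}\,|\binom{\nu}{4}|\,|w|^4$. Integrating against $e^{-u}u^\nu\,du/\Gamma(\nu+1)$ and using $\Gamma(\nu+5)/\Gamma(\nu+1) = (\nu+1)(\nu+2)(\nu+3)(\nu+4)$ bounds the main contribution by
\[
  \frac{e^{1/8}}{384\, z^4}\prod_{j=1}^{4}|\nu-(j-1)|(\nu+j).
\]
Pairing each $(\nu-(j-1))(\nu+j) = \nu(\nu+1) - j(j-1)$ and using $\nu \le N - 1/2$ shows that for $n \ge 4$ each pair is at most $N^2 - (2j-1)^2/4 \le N^2$, so the product is bounded by $N^8$; for $0 \le n \le 3$ the product is far smaller. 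Hence the main term is at most $(e^{1/8}/384)\,N^8/z^4 \approx 0.003\, N^8/z^4$.

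For the tail $u > U$, the factor $e^{-u}\le e^{-u/2}N^{-N}$ gives super-polynomial decay, so crude polynomial bounds on $|(1+iu/(2z))^\nu|$ and on the partial sum $|\sum_{k=0}^3 \binom{\nu}{k}(iu/(2z))^k|$ (analogous to those in the proof of Lemma~\ref{stlemma}) yield a contribution of order $N^{-N}$, entirely negligible compared to $N^8/z^4$ for $N\ge 20$. The main obstacle will be the numerical bookkeeping needed to confirm the total stays below $0.0043\,N^8/z^4$; the key ingredient is the pairing-based bound $\prod_{j=1}^4 (N^2 - (2j-1)^2/4) \le N^8$, which leaves enough slack between the computed coefficient $e^{1/8}/384 \approx 0.003$ and the claimed $0.0043$.
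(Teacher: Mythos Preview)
Your proposal is correct and follows essentially the same route as the paper: Taylor-expand $(1+iu/(2z))^{\nu}$ to fourth order inside~\eqref{jplusplus}, integrate the polynomial part via the Gamma integral, and split the remainder integral at a cutoff of order $N\ln N$. The only difference is cosmetic: you cut at $2N\ln N$ (as in Lemma~\ref{stlemma}) and obtain the factor $e^{1/8}$ on the main term, whereas the paper cuts at $4N\ln N$ and gets $e^{1/2}$; your pairing argument $\prod_{j}(n^{2}-(2j-1)^{2}/4)\le N^{8}$ is exactly the inequality the paper records as $\Gamma(\nu+5)/\lvert\Gamma(\nu-3)\rvert\le N^{8}$.
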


\begin{proof}
  Recalling~\eqref{jplusplus}, we see
  \begin{equation}\label{modj}
    {\scp*{\frac {\pi z}2}}^{\frac 12} J_n^{++} \scp*{z}e^{-i\omega}
    =\frac 1 {\Gamma\scp*{\nu+1}}
    \int_0^\infty e^{-u} u^{\nu} {\scp*{1+\frac {iu}{2z}}}^\nu \id{u}.
  \end{equation}
  We expand ${\scp*{1+ix}}^\nu$ for real $0\le x$ with Taylor's theorem
  into
  \[{\scp*{1+ix}}^\nu=1+i \nu x- \frac 12 \nu \scp*{\nu-1}x^2
    - \frac 1 6 i \nu \scp*{\nu-1}\scp*{\nu-2}x^3+ r\]
  where
  \begin{align*}
    \abs*{r}
    &=\abs*{
      \int_0^x \frac 1 {6} \nu \scp*{\nu-1}\scp*{\nu-2}\scp*{\nu-3}  {\scp*{x-t}}^3 {\scp*{1+it}}^{\nu-4}\, \id{t}}\\*
    &\le \frac 1 {24} \abs*{\nu \scp*{\nu-1}\scp*{\nu-2}\scp*{\nu-3}}  x^4 {\scp*{1+x^2}}^{\frac 12 \max\scb*{\nu-4, 0}}.
  \end{align*}
  Thus the right hand side of~\eqref{modj} becomes~\eqref{4order} with
  \[
    \abs*{R}\le\int_0^\infty e^{-u} u^\nu \abs*{r\scp*{u}}\id{u}
  \]
  and $r\scp*{u}$ similar to above with $x$ replaced by $u/2z$.  We cut
  the integral at $u=4N\ln\scp*{N}$. For $u\le 4N\ln\scp*{N}$ we have
  \[
    \abs*{r\scp*{u}}
    \le \frac{\abs{\nu \scp*{\nu-1}\scp*{\nu-2}\scp*{\nu-3}} u ^4 }{384 z^4}{\scp*{1+\frac 1N}}^{\frac N2}
  \]
  and thus
  \[
    \frac 1{\Gamma\scp*{\nu+1}}\int_0^{4N\ln\scp*{N}} e^u u^\nu r\scp*{u}\id{u}
    \le e^{1/2}
    \frac{\Gamma\scp*{\nu+5}}{384 \abs{\Gamma\scp*{\nu-3}} z^4}\le \frac {e^{1/2}
      N^8}{384 z^4}.
  \]
  For $u\ge 4N\ln \scp*{N}$ we have
  \[
    \abs*{r\scp*{u}}
    \le \frac{\abs{\nu \scp*{\nu-1}\scp*{\nu-2}\scp*{\nu-3}}  u ^4 }{6 {\scp*{2z}}^4}{\scp*{\frac uN}}^{N/2},
  \]
  and therefore
  \begin{align*}
  \MoveEqLeft
  \frac 1{\Gamma\scp*{\nu+1}} \int_{4N\ln\scp*{N}}^\infty e^{-u} u^\nu r\scp*{u}\id{u}
  \\ &\le
  \frac 1{384 z^4 \abs{\Gamma\scp*{\nu-3}}} \int_0^\infty e^{-u/2}e^{-2N\ln\scp*{N}} u^{\nu+4}
  {\scp*{\frac u N}}^{N/2}\id{u}
  \\ &\le
  \frac 1{96 z^4 \abs{\Gamma\scp*{\nu-3}}} \frac{2^{\nu+5+N/2}}{N^{2N+N/2}}\Gamma\scp*{\nu+5+N/2}
  \le z^{-4}.
  \end{align*}
    Adding the estimates for the two pieces of the integral gives the
  bound claimed in the lemma.
\end{proof}

With the above lemma, we obtain for $J^+_n$ in the range of $n$ and $z$
discussed in the lemma:
\begin{equation}\label{4terms}
  {\scp*{\frac {\pi z}2}}^{\frac 12}
  J_n^{+} \scp*{z} =a + bz^{-1}+ cz^{-2}+dz^{-3}+R
\end{equation}
with
\begin{align*}
a&=\cos\scp*{\omega},\\
b&=-  \sin\scp*{\omega}\frac 12 \scp*{n^2-\frac 14},\\
c&= - \cos\scp*{\omega}\frac 1 8 \scp*{n^2-\frac 14}\scp*{n^2-\frac 9 4},\\
d&= \sin\scp*{\omega} \frac 1{48} \scp*{n^2-\frac 14}\scp*{n^2-\frac 9 4}\scp*{n^2-\frac {25}4},
\end{align*}
and $R$ satisfies the same bounds as in the lemma. The analogous
identity holds for $J_n$ since it coincides with $J_n^+$ on the real
line.  Now we consider six indices $n_j$ and corresponding $\omega_j \vcentcolon= z - \pi/4 - n_j\pi/2$
and obtain for real $z$
\[
{\scp*{\frac {\pi} 2}}^{3}  \prod_{j=1}^6 J_{n_j} \scp*{z} z
=Az^{-2}+Bz^{-3}+Cz^{-4}+Q
\]
with
\begin{align*}
  A &= \cos\scp*{\omega_1}\cos\scp*{\omega_2}\cos\scp*{\omega_3}\cos\scp*{\omega_4}\cos\scp*{\omega_5}\cos\scp*{\omega_6},\\
  B &= - \sum_{j'=1}^{6} \frac{n_{j'}^2-1/4}2
      \sin\scp*{\omega_{j'}} \prod_{1 \leq j \leq 6, j \neq j'} \cos\scp*{\omega_j},\\
  C &= \sum_{j'=1}^{6} \sum_{j''=j'+1}^{6} \frac{\scp*{n_{j'}^2-1/4}\scp*{n_{j''}^2-1/4}}{4}
  \sin\scp*{\omega_{j'}}\sin\scp*{\omega_{j''}}
  \prod_{1 \leq j \leq 6, j \neq j', j\neq j''} \cos\scp*{\omega_{j}}\\
  &\qquad{} - \frac {\scs*{\sum_{j=1}^6 \scp*{n_j^2-1/4}\scp*{n_j^2-9/4}}}8
      \prod_{j=1}^6\cos\scp*{\omega_j}.
\end{align*}

The remainder term $Q$ we estimate from above. For this we collect
terms of order $z^{-5}$, $z^{-6}$, $z^{-7}$, $z^{-8}$ and higher order
separately. We begin with a remark on integrals of type
\[\int_T^\infty \prod_{j=1}^6 \phi_j\scp*{\omega_j}z^{-5}\]
where an odd number of functions $\phi_j$ are the cosine function and
an odd number of $\phi_j$ are the sine function. If a function is odd
in the variable $z$ about the point $\frac \pi 4$ then we call it of
parity $-1$, and if it is even we call it of parity $1$.  The function
\[\sin\scp*{\omega_j}=\sin\scp*{z-\frac \pi 4 - \frac {n\pi}{2}}\]
has parity $-{\scp*{-1}}^{n}$ about the point $\frac \pi 4$, while the
function
\[\cos\scp*{\omega_j}=\cos\scp*{z-\frac \pi 4 - \frac {n\pi}{2}}\]
has parity ${\scp*{-1}}^n$.  A product of six such functions with
$\sum_{j=1}^6 n_j$ even and an involving an odd number of sine function is
therefore odd about the point $\pi/4$.  This means it integrates to
zero about each period of the periodic function.  Hence by partial
integration
\begin{align*}
  \abs*{\int_T^\infty \prod_{j=1}^6 \phi_j\scp*{\omega_j}z^{-5}\id{z}}
  &\le \abs*{\int_T^\infty \partial_{z}^{-1} \scs*{\prod_{j=1}^6 \phi_j\scp*{\omega_j}} 5z^{-6}\id{z}}+T^{-5}\\
  &\le \scp*{\pi+1} T^{-5}
\end{align*}
where we used that the function $\prod_{j=1}^6 \phi_j\scp*{\omega_j}$ is
bounded by $1$ and since it integrates to $0$ of periods of length
$2\pi$ its primitive $\partial_{z}^{-1} \scs*{\prod_{j=1}^6 \phi_j\scp*{\omega_j}}$
is bounded by $\pi$.

We use this estimate in each of the terms of the fifth order, all of
which have an odd number of sine functions. We estimate all factors
$n_j^2-x$ by $N^2$. Counting the terms and referring to the
abbreviations in~\eqref{4terms}, we obtain $6$ terms with a factor
$d$, $30$ factors with a product $bc$ and $20$ terms with a factor
$b^3$. Thus this term is estimated by
\begin{equation}\label{fifth}
  \scp*{\frac 6{48}+\frac {30}{16}+\frac {20}{8}} \scp*{1+\pi}\frac {N^6}{T^5} \le 19  \frac {N^6}{T^5}.
\end{equation}

To estimate sixth order terms we estimate all sine and cosine
functions by $1$.  Integrating $z^{-6}$ then simply gives $T^{-5}/5$.
We obtain $6$ terms with a factor $R$, $30$ terms with a factor $bd$,
$15$ terms with a factor $c^2$, $60$ terms with a factor $cb^2$, $15$
terms with a factor $b^4$. This gives the estimate
\begin{equation}\label{sixth}
\scp*{ 6 \times 0.0043 + \frac {30}{96}+\frac {15}{64}+\frac{60}{32}+\frac {15}{16}}
\frac{N^8}{5T^5}\le 0.68\frac{N^8}{T^5}.
\end{equation}

The seventh order terms we estimate similarly.  We obtain $30$ terms
with a factor $Rb$, $30$ terms with a factor $dc$, $60$ terms with a
factor $db^2$, $60$ terms with a factor $c^2b$, $60$ terms with a
factor $cb^3$ and $6$ terms with a factor $b^5$. This gives the
estimate
\begin{equation}\label{seventh}
  \scp*{
    \frac {30}2 \times 0.0043
    + \frac {30}{384}
    + \frac {60}{192}
    + \frac{60}{128}
    + \frac{60}{64}
    + \frac {6}{32}
  }
  \frac{N^{10}}{6T^6}\le 0.35\frac{N^{10}}{T^6}.
\end{equation}

Counting the eighth order terms we find $30$ terms with a factor $Rc$,
$60$ terms with a factor $Rb^2$, $15$ terms with a factor $d^2$, $120$
terms with a factor $dcb$, $60$ terms with a factor $db^3$, $15$ terms
with a factor $c^3$, $45$ terms with a factor $c^2b^2$, $30$ terms
with a factor $cb^4$, and one term with a factor $b^6$. This gives the
estimate
\begin{equation}\label{eighth}
  \begin{split}
  \left\lparen
  \vphantom{0.0043\scp*{\frac {30}8 + \frac {60}4} + \frac {15}{2304} +\frac {120}{768} +\frac{60}{384} +\frac {15}{512} +\frac {45}{256} +\frac {30}{128} + \frac {1}{32}}
  0.0043
    \scp*{
      \frac {30}8
      + \frac {60}4}
  \right.
    &+ \frac {15}{2304}
    +\frac {120}{768}
    +\frac{60}{384}
  \\
  &
  \left.
    \vphantom{0.0043\scp*{\frac {30}8 + \frac {60}4} + \frac {15}{2304} +\frac {120}{768} +\frac{60}{384} +\frac {15}{512} +\frac {45}{256} +\frac {30}{128} + \frac {1}{32}}
    +\frac {15}{512}
    +\frac {45}{256}
    +\frac {30}{128}
    +\frac {1}{32}
  \right\rparen
  \frac{N^{12}}{7 T^7}
  \le 0.13\frac{N^{12}}{T^7}.
\end{split}
\end{equation}

The terms of order $9$ or higher we estimate more crudely. There are
at most $5^6$ terms, the product of pre factors being at most $0.0043$
if a factor $R$ is involved and being at most $\frac 1{128}$ if no
such factor is involved. Thus we get, assuming $N^2\le T$, the upper bound
\begin{equation}\label{ninth}
  \le \frac {5^6}{128}\frac{N^{14}}{8T^8}\le 16 \frac{N^{14}}{T^8}.
\end{equation}

Assuming $N=120 \ge 20$ and $T\ge 10N^2$ we may
add~\eqref{fifth},~\eqref{sixth},~\eqref{seventh},~\eqref{eighth},
and~\eqref{ninth} to
\begin{equation}\label{eq:tail-error}
  \begin{split}
    \abs*{I_{\bf k}^{T, \infty}-\tilde{I}_{\bf k}^{T, \infty}}
    &\le
    \left\lparen
      \vphantom{19\times  10^{-4}20^{-2}+0.68\times 10^{-4} + 0.35\times 10^{-5}+0.13\times 10^{-6}+ 16\times 10^{-7}}
      19\times  10^{-4}20^{-2}+0.68\times 10^{-4} + 0.35\times 10^{-5}
    \right. \\
    &\qquad
    \left.
      \vphantom{19\times  10^{-4}20^{-2}+0.68\times 10^{-4} + 0.35\times 10^{-5}+0.13\times 10^{-6}+ 16\times 10^{-7}}
      +0.13\times 10^{-6}+ 16\times 10^{-7}
    \right\rparen{} T^{-1}\\
    &\le 0.75\times 10^ {-4}T^{-1}\\
    &\le 0.5 \times 10^{-9}.
  \end{split}
\end{equation}
Here we used $T=150{,}000 > 10N^2$ in the last
inequality. Summing~\eqref{eq:gauss-nueric-error-on-[0,
  S]},~\eqref{eq:gauss-nueric-error-on-[S, T]},
and~\eqref{eq:tail-error}, we finally obtain
\begin{equation}\label{eq:approx-error}
  \abs*{I_{\bf k}-\tilde{I}_{\bf k}}\le 0.73 \times 10^{-9}.
\end{equation}

\section{Numerical Results}%
\label{numresults}

Using the approximations $\tilde{I}_{\bf k}$, one computes
approximations
${\scp{\tilde{Q}_{{\bf m}, {\bf n}}}}_{{\bf m}, {\bf n}\in {X}_{D}}$
to the matrices
${\scp{Q_{{\bf m}, {\bf n}}}}_{{\bf m}, {\bf n}\in {X}_{D}}$
analogously to the formulae in the introduction.  Using evaluations of
Bessel functions of sufficient accuracy and arbitrary-precision
arithmetic, the $\tilde{I}_{\bf k}$ and $\tilde{Q}_{{\bf m}, {\bf n}}$
as well as the eigenvalues of the matrices
${\scp{\tilde{Q}_{{\bf m}, {\bf n}}}}_{{\bf m}, {\bf n}\in {X}_{D}}$
were computed up to an error of at most $10^{-20}$.  The computed
smallest eigenvalue for each $D$ is plotted in
Figure~\ref{fig:smallest_evs} and shown in
Table~\ref{tab:smallest_evs_zero_to_ten} for small $D$.  In
particular, the matrix
${\scp{\tilde{Q}_{{\bf m}, {\bf n}}}}_{{\bf m}, {\bf n}\in {X}_{D}}$
is positive definite with smallest eigenvalue at least $0.000036$.

\begin{table}
\begin{center}
\begin{tabular}{cc}
  \toprule
  $D$&$\lambda_{\min}$\\
  \midrule
  0  & 
       0.0000369980\\
  2  & 
       0.0000371564\\
  4  & 
       0.0000374854\\
  6  & 
       0.0000379002\\
  8  & 
       0.0000384081\\
  10 & 
       0.0000389622\\
  \bottomrule
\end{tabular}
\end{center}
\caption{Smallest eigenvalues for matrices
  ${\scp*{Q_{{\bf m}, {\bf n}}}}_{{\bf m}, {\bf n}\in X_{D}}$, for
  $D = 0, 2, \ldots, 10$, rounded to 11 significant figures.}%
\label{tab:smallest_evs_zero_to_ten}
\end{table}

We have for each ${\bf m}, {\bf n}$ in question
from~\eqref{eq:approx-error}
\[\abs*{{Q}_{{\bf m}, {\bf n}} - \tilde{Q}_{{\bf m}, {\bf n}}}
\le 16 \times 0.73 \times 10^{-9}\le 1.2 \times 10^{-8} .\] This
entry-wise bound is multiplied by the size $1860$ of the set $X_D$ for
$N=120$ to obtain a bound for the operator norm

\[\norm*{
{\scp*{{Q}_{{\bf m}, {\bf n}}}}_{{\bf m}, {\bf n}\in {X}_{D}}- {\scp*{\tilde{Q}_{{\bf
    m}, {\bf n}}}}_{{\bf m}, {\bf n}\in {X}_{D}} }_{op}\le 1860 \times
1.2 \times 10^{-8} \le 0.000023 .\] As this is less than the smallest
eigenvalue of the matrix
\[{\scp*{\tilde{Q}_{{\bf m}, {\bf n}}}}_{{\bf m}, {\bf n}\in {X}_{D}},\]
we may deduce that the matrix
${\scp{{Q}_{{\bf m}, {\bf n}}}}_{{\bf m}, {\bf n}\in {X}_{D}}$ is positive
definite as well.  Following the reductions
of~\cite{oliveira2019band}, this proves Theorem~\ref{thm:main}.

While the error bounds~\eqref{eq:approx-error} are good enough to
prove the main theorem, they are not good enough to guarantee that our
plots and tables of eigenvalues of
${\scp{\tilde{Q}_{{\bf m}, {\bf n}}}}_{{\bf m}, {\bf n}\in {X}_{D}}$ are
representative for those of
${\scp{{Q}_{{\bf m}, {\bf n}}}}_{{\bf m}, {\bf n}\in {X}_{D}}$ within the
margins suggested by the visible information.  However, the arguments
obtaining~\eqref{eq:approx-error} as well as the use of the operator
norm above are very crude estimates. With very high likelihood, the
true differences between the corresponding eigenvalues of the two
matrices are much smaller than the operator norm above.  In this
sense, we consider our plots and tables of eigenvalues of
${\scp{\tilde{Q}_{{\bf m}, {\bf n}}}}_{{\bf m}, {\bf n}\in {X}_{D}}$ as very
much representative of those of
${\scp{{Q}_{{\bf m}, {\bf n}}}}_{{\bf m}, {\bf n}\in {X}_{D}}$.  This is in
accordance with the rather regular behaviour of the plots and of
Table~\ref{tab:smallest_evs_zero_to_ten}, a level of regularity that
one might expect from the eigenvalues of
${\scp{{Q}_{{\bf m}, {\bf n}}}}_{{\bf m}, {\bf n}\in {X}_{D}}$.

A number of numerical findings would be interesting to understand
analytically and maybe prove for large $N$ asymptotically. Any
progress in this understanding would presumably require an asymptotic
understanding of the entries of the matrices in~\eqref{qmatrices}.
Figure~\ref{fig:column} shows sample columns of these matrices. For
nicer visualization, we have undone the dimension reduction by
symmetry in~\cite{oliveira2019band} and shown the related matrices
\begin{equation}\label{qtmatrices}
\tilde{Q}^{\scp*{D}} \vcentcolon= {\scp*{Q_{{\bf m}, {\bf n}}}}_{{\bf m}, {\bf n}\in Z_{D}},
\end{equation}
in the space
\[
Z_{D} \vcentcolon=
\Set*{ \scp*{m_1, m_2, m_3}\in {\scp*{2\mathbb{Z}}}^3\setminus \{\scp*{0, 0, 0}\}
  \given \substack{m_1+m_2+m_3=D,\\
    \abs*{m_1}, \abs*{m_2}, \abs*{m_3}\le N}}.
\]
The space $Z_{D}$ is naturally depicted as a hexagon. It has a sixfold
symmetry under permutations of the three elements, which is the
symmetry group of a regular hexagon. This symmetry extends to
symmetries of the columns shown and is visible in the plots of
Figure~\ref{fig:column}. The space $X_D$ in our previous calculations
is only one fundamental domain under this symmetry.

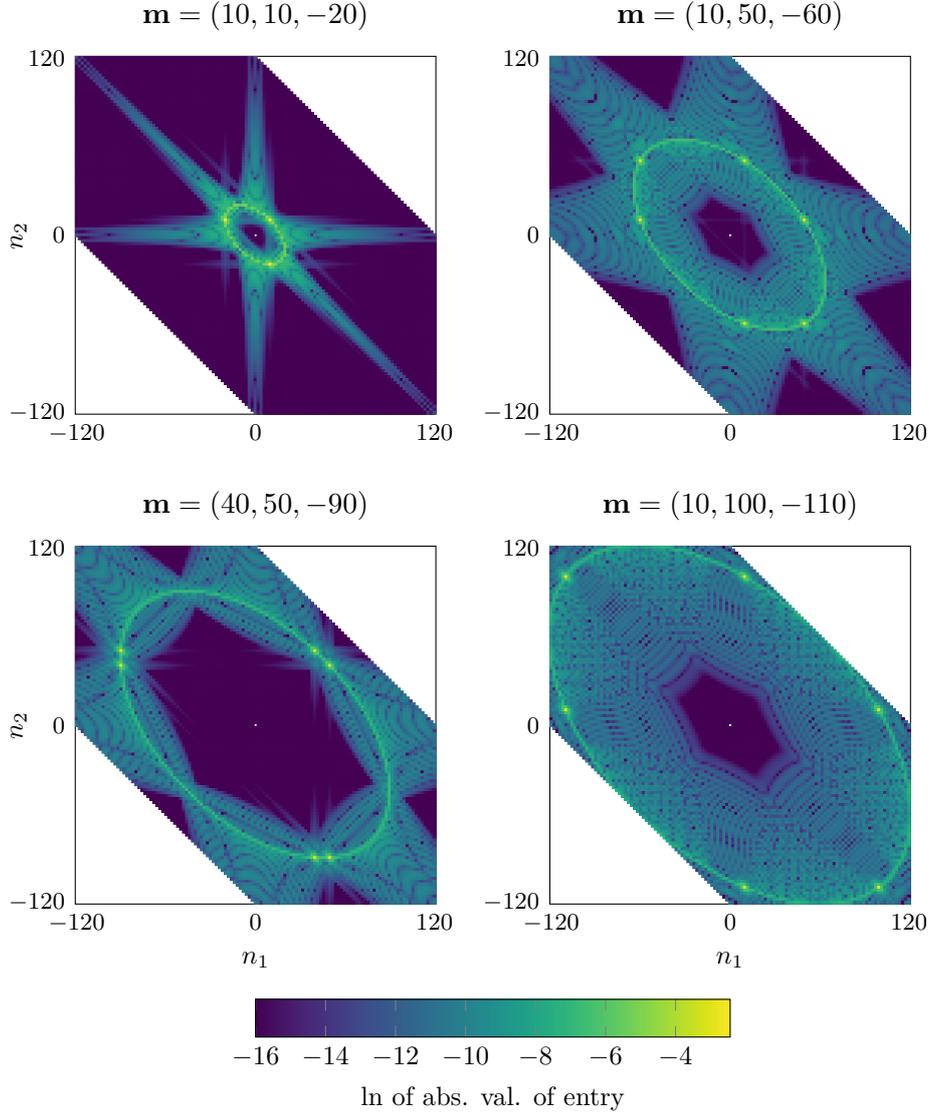
\begin{figure}
  {\centering
    \begin{tikzpicture}
    \begin{groupplot}[
      group style={
        group size=2 by 2,
        vertical sep=1.75cm,
        horizontal sep=1.5cm,
        xlabels at=edge bottom,
        ylabels at=edge left,
      },
      colormap/viridis,
      enlargelimits=false,
      scale only axis=true,
      width=0.375\textwidth,
      height=0.375\textwidth,
      small,
      xtick={-120, 0, 120},
      ytick={-120, 0, 120},
      tick style={draw=none},
      xlabel=$n_1$,
      ylabel=$n_2$,
      y label style={yshift=-0.5cm},
      point meta min=-16,
      point meta max=-2.44064873320304770728701,
      ]

      \nextgroupplot[
      title={${\bf{m}}=\scp*{10, 10, -20}$},
      width=0.375\textwidth,
      height=0.375\textwidth,
      scale only axis=true,
      ]
      \addplot[
      matrix plot*,
      point meta=explicit
      ] table[meta index=2] {figure_data/figure_4_col_10_10.dat};

      \nextgroupplot[
      title={${\bf{m}}=\scp*{10, 50, -60}$},
      width=0.375\textwidth,
      height=0.375\textwidth,
      scale only axis=true,
      ]
      \addplot[
      matrix plot*,
      point meta=explicit
      ] table[meta index=2] {figure_data/figure_4_col_10_50.dat};

      \nextgroupplot[
      title={${\bf{m}}=\scp*{40, 50, -90}$},
      width=0.375\textwidth,
      height=0.375\textwidth,
      scale only axis=true,
      colorbar horizontal,
      colorbar style={
        at={(parent axis.below south)},
        anchor=north west,
        width=\pgfkeysvalueof{/pgfplots/parent axis width} +  \pgfkeysvalueof{/pgfplots/group/horizontal sep},
        xlabel={$\ln$ of abs. val. of entry},
        tick style={draw=gray},
        label style={font=\small},
        tick label style={font=\small},
        ytick distance=1,
      }
      ]
      \addplot[
      matrix plot*,
      point meta=explicit
      ] table[meta index=2] {figure_data/figure_4_col_40_50.dat};

      \nextgroupplot[
      title={${\bf{m}}=\scp*{10, 100, -110}$},
      width=0.375\textwidth,
      height=0.375\textwidth,
      scale only axis=true,
      ]
      \addplot[
      matrix plot*,
      point meta=explicit
      ] table[meta index=2] {figure_data/figure_4_col_10_100.dat};
    \end{groupplot}
  \end{tikzpicture}
}
\caption{Columns of~\eqref{qtmatrices} with $D=0$ and $N=120$ for
  sample fixed values of ${\bf m}$, in coordinates $n_{1},
  n_{2}$. Coloring is logarithmic in the absolute values of entries;
  logarithmic values below $-16$ are clipped.}%
\label{fig:column}
\end{figure}

The largest entry in each column of the matrices in
$\eqref{qmatrices}$ is the diagonal element. The diagonal elements appear as brights dots on the ellipses in Figure~\ref{fig:column}. Due to the sixfold
symmetry of the visualization, they appear three or six times in each
image depending on their orbit under the symmetry group, namely the
points where $\mathbf{m}$ is a permutation of $\mathbf{n}$.  While it
may be tempting to try to prove positive definiteness of the matrices
in~\eqref{qmatrices} by diagonal dominance, for $N=120$ and $D=0$, the
ratios
\begin{equation}\label{eq:diagonal-non-dominance}
r_{\bfm} \vcentcolon= \abs{Q_{\bf{m}, \bf{m}}}^{-1} \sum_{\bfn \in X_{0}\setminus\Set{\bfm}} \abs{Q_{\bf{m}, \bf{n}}},
\end{equation}
can be large as well as small, e.g.
\[
r_{\scp*{-2, 0, 2}} \approx 3.1,
\quad
r_{\scp*{-90, 40, 50}} \approx 1.9,
\quad
r_{\scp*{-4, 2, 2}} \approx 0.7.
\]
Indeed, this failure of this attempt is natural due to the existence
of some very small eigenvalues.

A secondary collection of large entries in each column shown in
Figure~\ref{fig:column} can be seen as yellow ellipses in the
diagrams.  These visual ellipses correspond to circles in a
visualization of the domain as regular hexagon.  The circles were
already observed in~\cite{oliveira2019band}. The current data even
more strongly suggests that these secondary peaks are located on the
surface
\begin{equation}
\label{eq:observed-ellipse}
n_{1}^{2}+n_{2}^{2}+n_{3}^{2} = m_{1}^{2}+m_{2}^{2}+m_{3}^{2}.
\end{equation}
Indeed, in Figure~\ref{fig:column-fit}, the ellipse/circle given
by~\eqref{eq:observed-ellipse}, plotted as a white line, is overlayed
onto one of the plots, showing that it matches the yellow peaks very
well. The size of the sum of off-diagonal elements
in~\eqref{eq:diagonal-non-dominance} is essentially entirely due to
the elements near the circle.

\begin{figure}
  \begin{tikzpicture}
    \begin{axis}[
      colormap/viridis,
      enlargelimits=false,
      scale only axis,
      width=0.6\textwidth,
      height=0.6\textwidth,
      xtick={-120, 0, 120},
      ytick={-120, 0, 120},
      tick style={draw=none},
      xlabel=$n_1$,
      ylabel=$n_2$,
      colorbar,
      colorbar style={
        ylabel={$\ln$ of abs. val. of entry},
        tick style={draw=gray},
        label style={font=\small},
        tick label style={font=\small},
        title style={font=\small},
      },
      point meta min=-16,
      point meta max=-3.11999,
      ]

      \addplot[
      matrix plot*,
      point meta=explicit
      ] table[meta index=2] {figure_data/figure_5_col_20_70.dat};
      \addplot[
      domain=-pi:pi,
      samples=300,
      dashed,
      white,
      line width=0.25pt]
      (
      {(10/3)*sqrt(67)*(3*cos(2*pi*deg(x)) + sqrt(3)*sin(2*pi*deg(x)))},
      {(10/3)*sqrt(67)*(-3*cos(2*pi*deg(x)) + sqrt(3)*sin(2*pi*deg(x)))}
      );
    \end{axis}
  \end{tikzpicture}
  \caption{Column ${\bf m} = \scp*{20, 70, -90}$
    of~\eqref{qtmatrices} with $D=0$ and $N=120$, in coordinates
    $n_{1}, n_{2}$. Coloring is logarithmic in the absolute values of
    entries; logarithmic values below $-16$ are clipped. The white
    line indicates the ellipse given by~\eqref{eq:observed-ellipse}.}%
\label{fig:column-fit}
\end{figure}
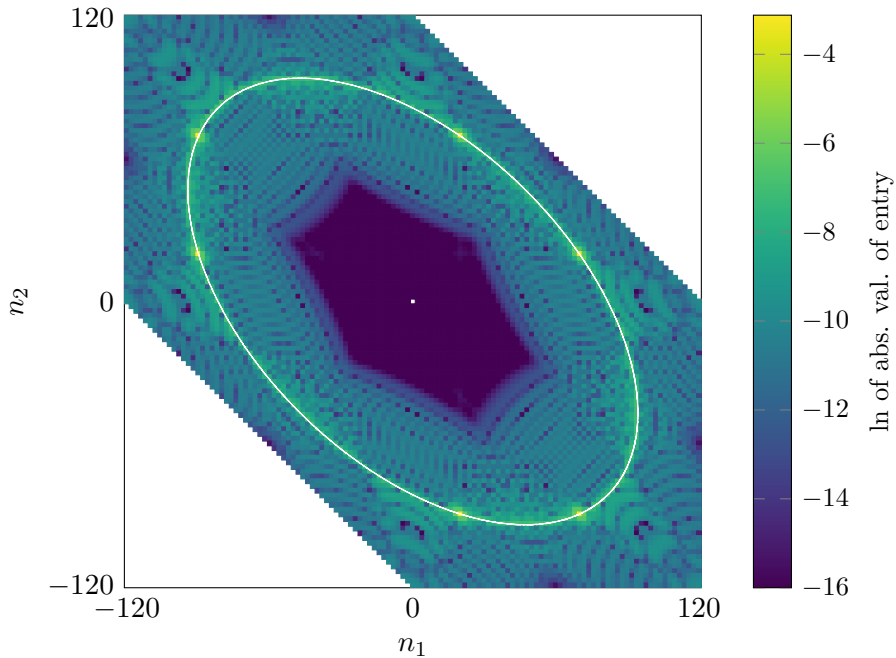

At the moment, we do not even have a qualititive understanding of the
occurrence of these circles, let alone a quantitative one, which would
probably be required if one wanted to extract positive definiteness
from the structure of these matrices.  Figure~\ref{fig:circle-radial}
plots the values of the entries of the column $\scp*{40, 60, -100}$ as a
function of the radial variable $\sqrt{n_{1}^{2}+n_{2}^{2}+n_{3}^{2}}$
in the vicinity of the radius of the yellow circle, which is about
$123.3$. The plot strongly suggests that the pattern of the circle is
asymptotically well approximated by a smooth radial function of low
complexity.  The value at the diagonal element, which is at radius
about $123.3$, is not plotted in Figure~\ref{fig:circle-radial},
because it is too large at about $0.044$.  Likewise, two further
entries near the diagonal elements are not shown, they have value
$-0.0024$. They are part of the small ring of six large elements
around the diagonal elements, which also include the four entries with
values near $-0.0015$ shown in Figure~\ref{fig:circle-radial}.  Also,
small matrix entries are cut off in the plot~\ref{fig:circle-radial}.

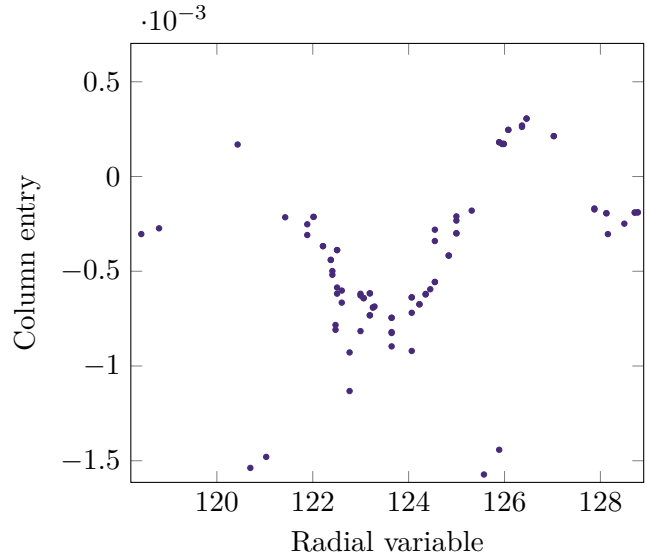
\begin{figure}
  {\centering
    \begin{tikzpicture}
      \begin{axis}[
        scale only axis,
        width=0.8*\axisdefaultwidth,
        xlabel=Radial variable,
        ylabel=Column entry,
        xmin=118.205,
        xmax=128.903,
        ymin=-0.00161428,
        ymax=0.000702375,
        cycle list={[indices of colormap={2} of colormap/viridis]},
        ]
        \addplot+ [only marks, mark size=1pt] table {figure_data/figure_6.dat};
      \end{axis}
    \end{tikzpicture}
  }
  \caption{Partial plot of the values of the entries of column
    ${\bf m} = \scp*{40, 60, -100}$ of~\eqref{qtmatrices} with $D=0$
    and $N=120$ in dependence of the radial
    variable.}\label{fig:circle-radial}
\end{figure}

Due to the monotonicity shown in Figure~\ref{fig:smallest_evs}, it is
of particular interest to study the matrix in~\eqref{qmatrices} for
$D=0$. Figure~\ref{fig:evs_D_0} shows all eigenvalues of the matrix
$D=0$ for $N=120$ sorted and enumerated by size. For comparison, we
also show the analogous plot for the matrix $D=200$, a diagram that is
somewhat similar.  Note the two jumps of the diagram for $D=0$ at
about the $60$-th smallest and $60$-th largest eigenvalues.  An
analytical proof of positivity for all $N$ in the asymptotic regime
would require a better understanding of the ensemble of small
eigenvalues below the first jump.

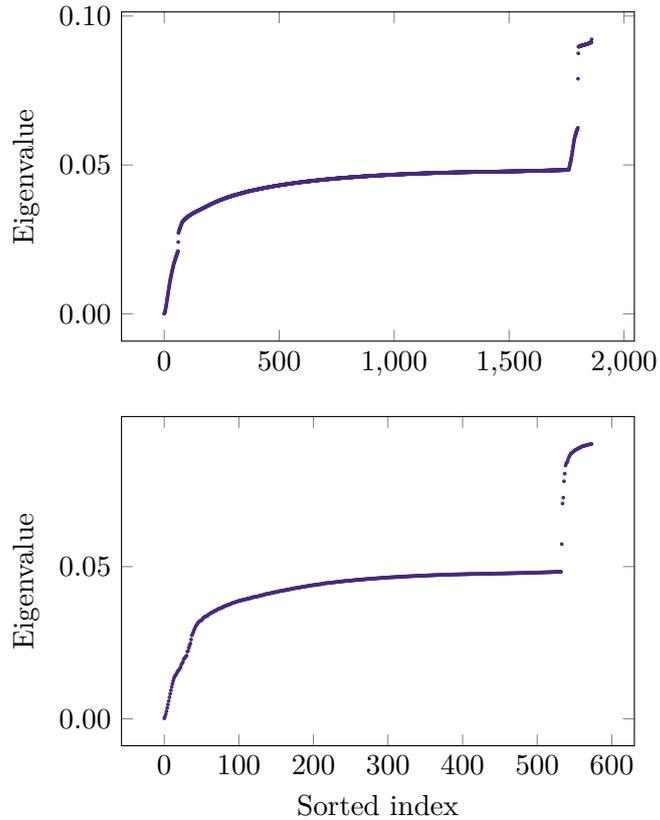
\begin{figure}
  {\centering
    \begin{tikzpicture}
      \begin{groupplot}[
        group style={
          group size=1 by 2,
          xlabels at=edge bottom,
          ylabels at=edge left,
        },
        cycle list={[indices of colormap={2} of colormap/viridis]},
        xlabel=Sorted index,
        ylabel=Eigenvalue,
        ]

        \nextgroupplot[
        scale only axis,
        width=0.8*\axisdefaultwidth,
        height=0.6*\axisdefaultheight,
        scaled ticks=false,
        y tick label style={
            /pgf/number format/.cd,
            fixed,
            fixed zerofill,
            precision=2,
            /tikz/.cd
          }]
        \addplot+ [only marks, mark size=0.5pt] table {figure_data/figure_3_d_0.dat};

        \nextgroupplot[
        scale only axis,
        width=0.8*\axisdefaultwidth,
        height=0.6*\axisdefaultheight,
        scaled ticks=false,
        y tick label style={
            /pgf/number format/.cd,
            fixed,
            fixed zerofill,
            precision=2,
            /tikz/.cd
          }]
        \addplot+ [only marks, mark size=0.5pt] table {figure_data/figure_3_d_200.dat};
      \end{groupplot}
    \end{tikzpicture}
  }
  \caption{Sorted eigenvalues of~\ref{qmatrices} with $D=0$ (top, 1860
    eigenvalues) and $D=200$ (bottom, 574 eigenvalues), both with
    $N=120$.}%
  \label{fig:evs_D_0}
\end{figure}

The eigenfunctions corresponding to the smallest eigenvalues
experimentally appear to be essentially radial functions in the
visualization corresponding to Figure~\ref{fig:column}, cf.
Figure~\ref{fig:eigenvector-OTZ}. There are three notable observations
to be made here:
\begin{enumerate}
\item The bulk of the eigenvector depends essentially only on
  $\sqrt{n_{1}^{2}+n_{2}^{2}+n_{3}^{2}}$.
\item The value at the points where two of the $n_{i}$s coincide is
  smaller than suggested by this radial dependence by a factor of two
  (although this is not evident from this particular visualization).
\item Outside of the largest circle that fits into the region $Z_{0}$,
  the eigenvector is small.
\end{enumerate}
The second point above is related to the fact that in the
matrices~\eqref{qtmatrices}, the rows and the columns
of~\eqref{qtmatrices} appear as many times as there are distinct
permutations of $\bfm$ and $\bfn$ respectively, i.e. 3 and 6 times. It
therefore appears more natural to analyze the eigenvalues
of~\eqref{qtmatrices}, or, equivalently, the matrices given by
${\scp*{p_{\bfm}Q_{\bfm, \bfn}p_{\bfn}}}_{\bfm, \bfn \in X_{0}}$,
where $p_{\bfm}$ is the number of distinct permutations of $\bfm$.

\begin{figure}
  {\centering
    \begin{tikzpicture}
    \begin{axis}[
      colormap/viridis,
      enlargelimits=false,
      scale only axis,
      width=0.8*\axisdefaultwidth,
      height=0.8*\axisdefaultwidth,
      xtick={-120, 0, 120},
      ytick={-120, 0, 120},
      tick style={draw=none},
      xlabel=$n_1$,
      ylabel=$n_2$,
      colorbar,
      colorbar style={
        ylabel={entry value},
        tick style={draw=gray},
        label style={font=\small},
        tick label style={font=\small},
        title style={font=\small},
      }
      ]

      \addplot[
      matrix plot*,
      point meta=explicit
      ] table[meta index=2] {figure_data/figure_7.dat};
    \end{axis}
  \end{tikzpicture}
}
\caption{Eigenvector of~\eqref{qmatrices} with $D=0$ and $N=120$
  corresponding to the smallest eigenvalue in coordinates
  $n_{1}, n_{2}$. The dotted lines are points where the orbit of the
  symmetry group has only three elements. They are an artifact of
  dimension reduction: the values of the eigenvector at these points
  are half as large as those of the corresponding eigenvector of
  \eqref{qmatrices}.}%
\label{fig:eigenvector-OTZ}
\end{figure}
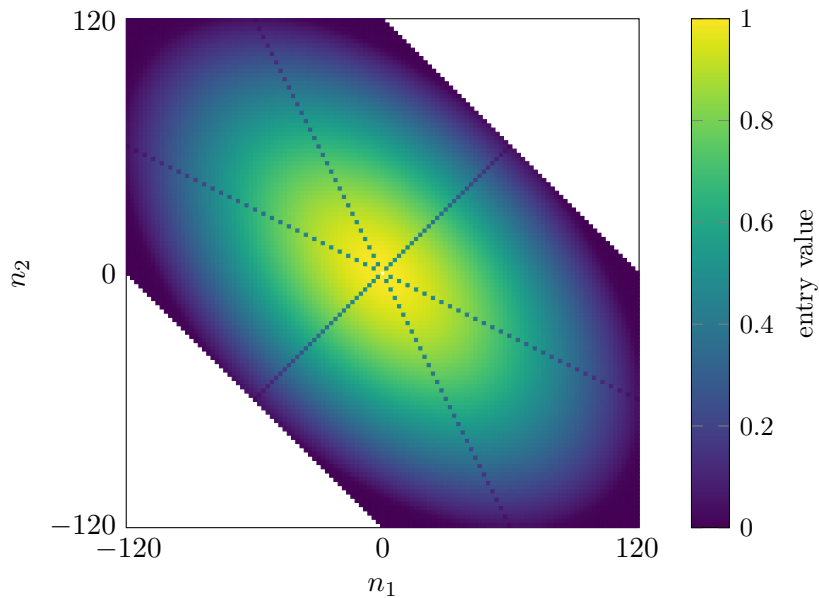

In view of the third point above, it also appears natural to truncate
the matrices not according to
$\max\scp*{\abs{n_{1}}, \abs{n_{2}}, \abs{n_{3}}}$, but according to
$\sqrt{n_{1}^{2}+n_{2}^{2}+n_{3}^{2}}$.  In what follows, let us
therefore consider the matrices
\begin{equation}
\label{l2matrices}
Q^{\circ} \vcentcolon= \scp*{p_{\bfm}Q_{\bfm, \bfn}p_{\bfn}}_{\bfm, \bfn \in X^{\circ}},
\end{equation}
where
\[
X^{\circ} =
\Set*{ \scp*{m_1, m_2, m_3}\in {\scp*{2\mathbb{Z}}}^3\setminus \{\scp*{0, 0, 0}\}
  \given \substack{m_1+m_2+m_3=0,\\
    \sqrt{m_{1}^{2}+m_{2}^{2}+m_{3}^{2}} \le \sqrt{3/2}N,\\
    m_{1} \leq m_{2} \leq m_{3} }}.
\]
is the largest disc contained in $X_{0}$.
The eigenvectors of the matrices~\eqref{l2matrices} corresponding to
small eigenvalues seem to be smooth functions of the radial variable
$\sqrt{n_{1}^{2}+n_{2}^{2}+n_{3}^{2}}$, see
Figure~\ref{fig:5eigenvectors-l2-truncation}.
In that figure, the five smallest eigenvalues are represented by different colors.
For each eigenvalue, there is a point for every $\bfn \in X^{\circ}$.
Surprisingly, the corresponding eigenvector entries $s_{\bf{n}}$ fall on a one-dimensional curve, although $X^{\circ}$ is taken from a two-dimensional lattice.
After a suitable rescaling, the profile of the curve seems to be independent of $N$,
as shown in Figure~\ref{fig:eigenvectors-l2-truncation-N-varies} for the smallest eigenvalue.

\begin{figure}
  {\centering
    \begin{tikzpicture}
      \begin{axis}[
        scale only axis,
        width=0.8*\axisdefaultwidth,
        xlabel=$\norm*{\bf{n}}_2$,
        ylabel=$s_{\bf{n}}$,
        legend cell align={left},
        legend image post style={scale=3, fill opacity=1.0},
        cycle list={[indices of colormap={0,3,6,9,12} of colormap/viridis]},
        ]

        \addplot+ [only marks, mark size=0.5pt, mark=*]
        table {figure_data/figure_8_sn_1.dat};
        \addlegendentry{$\lambda_1$}

        \addplot+ [only marks, mark size=0.5pt, mark=*]
        table {figure_data/figure_8_sn_2.dat};
        \addlegendentry{$\lambda_2$}

        \addplot+ [only marks, mark size=0.5pt, mark=*]
        table {figure_data/figure_8_sn_3.dat};
        \addlegendentry{$\lambda_3$}

        \addplot+ [only marks, mark size=0.5pt, mark=*]
        table {figure_data/figure_8_sn_4.dat};
        \addlegendentry{$\lambda_4$}

        \addplot+ [only marks, mark size=0.5pt, mark=*]
        table {figure_data/figure_8_sn_5.dat};
        \addlegendentry{$\lambda_5$}
      \end{axis}
    \end{tikzpicture}
  }
  \caption{Eigenfunctions associated to five smallest eigenvalues
    $\lambda_i, 1 \leq i \leq 5$ of~\eqref{l2matrices} with $D=0$ and
    $N=120$.}%
  \label{fig:5eigenvectors-l2-truncation}
\end{figure}
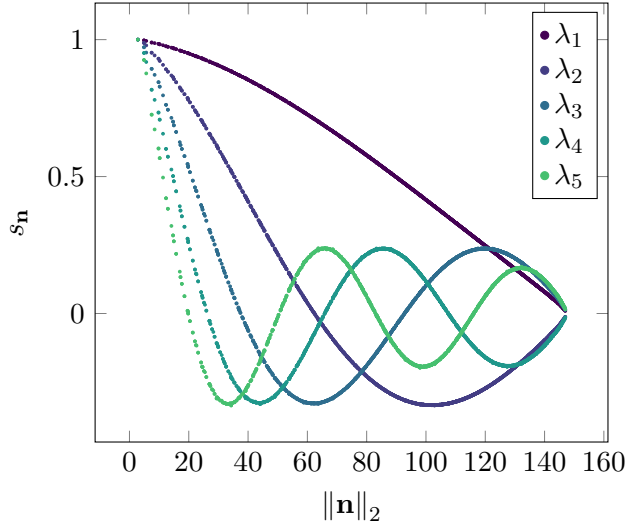

\begin{figure}
  {\centering
    \begin{tikzpicture}
      \begin{axis}[
        scale only axis,
        width=0.8*\axisdefaultwidth,
        xlabel=$\frac{\sqrt{n_1^2+n_2^2+n_3^2}}{\sqrt{\frac{3}{2}}N}$,
        ylabel=$s_{\bf{n}}$,
        legend cell align={left},
        legend image post style={scale=3, fill opacity=1.0},
        cycle list={[indices of colormap={0,3,6,9,12} of colormap/viridis]},
        ]

        \addplot+ [only marks, mark size=0.5pt, mark=*, fill opacity=0.75, draw opacity=0]
        table {figure_data/figure_9_n_40.dat};
        \addlegendentry{$N=40$}

        \addplot+ [only marks, mark size=0.5pt, mark=*, fill opacity=0.75, draw opacity=0]
        table {figure_data/figure_9_n_60.dat};
        \addlegendentry{$N=60$}

        \addplot+ [only marks, mark size=0.5pt, mark=*, fill opacity=0.75, draw opacity=0]
        table {figure_data/figure_9_n_80.dat};
        \addlegendentry{$N=80$}

        \addplot+ [only marks, mark size=0.5pt, mark=*, fill opacity=0.75, draw opacity=0]
        table {figure_data/figure_9_n_100.dat};
        \addlegendentry{$N=100$}

        \addplot+ [only marks, mark size=0.5pt, mark=*, fill opacity=0.75, draw opacity=0]
        table {figure_data/figure_9_n_120.dat};
        \addlegendentry{$N=120$}
      \end{axis}
    \end{tikzpicture}
  }
  \caption{Eigenfunctions associated to the smallest
    eigenvalues of~\eqref{l2matrices} with $D=0$ and
    $N\in\Set{40, 60, 80, 100, 120}$.}%
  \label{fig:eigenvectors-l2-truncation-N-varies}
\end{figure}
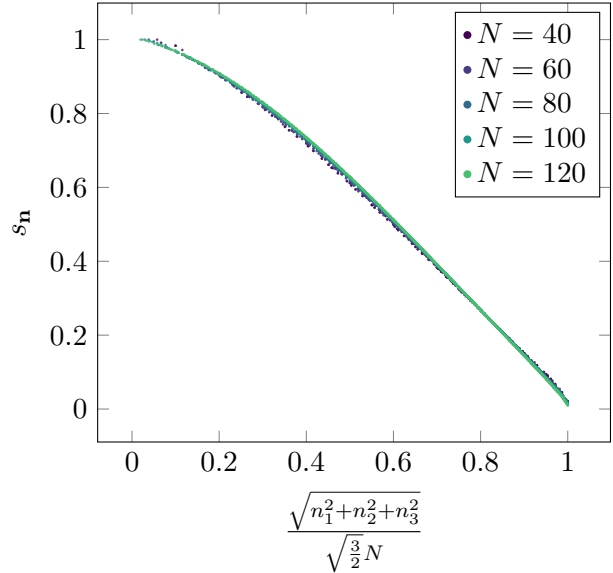

It is natural to link the behaviour of the eigenfunctions to the
smallest eigenvectors to a natural enemy of the sharp Fourier
extension conjecture. Namely, functions on the circle which
approximate two Dirac deltas at antipodally symmetric points are close
competitors to extremize the Tomas-Stein functional; they ``lose'' to
the constant function by only a small amount. Such Dirac deltas, on
the Fourier transform side depicted in the above hexagons, correspond
to wide bumps such as the lowest eigenfunction. One can well imagine
that all the radial eigenfunctions to small eigenvalues aspire to
resolve structure near the Dirac deltas.

\section*{Acknowledgements}
The main calculations described in this paper were performed using the
supercomputing facilities of Fraunhofer SCAI\@. The authors
acknowledge support by the Deutsche Forschungsgemeinschaft through the
Hausdorff Center for Mathematics (DFG Projektnummer 390685813) and the
Collaborative Research Center 1060 (DFG Projektnummer 211504053). The
authors also gratefully acknowledge the comments and suggestions of
the anonymous reviewers, and in particular their detection of several
small errors.

\printbibliography{}

\end{document}